\definecolor{darkcandyapplered}{rgb}{0.64, 0.0, 0.0}
\definecolor{midnightblue}{rgb}{0.1, 0.1, 0.44}
\definecolor{mygreen}{rgb}{0.09, 0.45, 0.27}
\definecolor{mymagenta}{rgb}{0.8, 0.0, 0.8}
\definecolor{myred}{rgb}{0.81, 0.09, 0.13}
\definecolor{mahogany}{rgb}{0.75, 0.25, 0.0}
\definecolor{mynewgreen}{HTML}{43916e}
\definecolor{myneworange}{HTML}{FF7F00}
\newcommand{\zerocol}{\color{mymagenta}}
\newcommand{\onecol}{\color{mygreen}}
\newcommand{\twocol}{\color{myneworange}}
\newcommand{\threecol}{\color{myred}}
\newcommand{\darkcandyapplered}{\color{darkcandyapplered}}
\definecolor{mylightblue}{HTML}{336699}
   \newtheorem{theorem}{Theorem}[section]
   \newtheorem{proposition}[theorem]{Proposition}
   \newtheorem{corollary}[theorem]{Corollary}
\theoremstyle{definition}
   \newtheorem{definition}[theorem]{Definition}
   \newtheorem{question}[theorem]{Question}
   \newtheorem{example}[theorem]{Example}
\theoremstyle{remark}
\numberwithin{equation}{section} 
\newcommand{\NN}{\mathbb{N}} 
\newcommand{\ZZ}{\mathbb{Z}}
\newcommand{\CC}{\mathbb{C}} 
\newcommand{\fS}{\mathfrak{S}} 
\newcommand{\fB}{\mathfrak{B}} 
\newcommand{\aA}{\mathcal{A}}
\newcommand{\bx}{{\bm x}}
\renewcommand{\to}{\rightarrow}
\newcommand{\then}{\Rightarrow}
\newcommand{\tl}{\tilde}
\newcommand{\ol}{\overline}
\renewcommand{\aa}{\alpha} 
\newcommand{\bb}{\beta}
\newcommand{\ee}{\epsilon}
\renewcommand{\ll}{\lambda}
\newcommand{\Des}{{\rm Des}}
\newcommand{\sDes}{{\rm sDes}}
\newcommand{\rmS}{{\rm S}} 
\newcommand{\rmco}{{\rm co}} 
\newcommand{\rmComp}{{\rm Comp}} 
\newcommand{\rmZ}{{\rm Z}} 
\newcommand{\rmD}{{\rm D}} 
\newcommand{\SYT}{{\rm SYT}} 
\newcommand{\Sym}{{\rm Sym}} 
\newcommand{\ch}{{\rm ch}} 
\def\rww{{\mathrm w}}
\def\rzz{{\mathrm z}}
\def\bll{{\bm \lambda}} 
\def\bQ{{\bm Q}}
\def\bP{{\bm P}}
\def\bbone{\mathbb{1}}
\newcommand{\defn}[1]{{\bf\color{mylightblue}{#1}}}
\newcommand{\xMapsto}[2][]{\ext@arrow 0599{\Mapstofill@}{#1}{#2}}
\def\Mapstofill@{\arrowfill@{\Mapstochar\Relbar}\Relbar\Rightarrow}
\newcommand{\mysetminusD}{\hbox{\tikz{\draw[line width=0.6pt,line cap=round] (3pt,0) -- (0,6pt);}}}
\newcommand{\mysetminusT}{\mysetminusD}
\newcommand{\mysetminusS}{\hbox{\tikz{\draw[line width=0.45pt,line cap=round] (2pt,0) -- (0,4pt);}}}
\newcommand{\mysetminusSS}{\hbox{\tikz{\draw[line width=0.4pt,line cap=round] (1.5pt,0) -- (0,3pt);}}}
\newcommand{\mysetminus}{\mathbin{\mathchoice{\mysetminusD}{\mysetminusT}{\mysetminusS}{\mysetminusSS}}}
\title{Descent representations and colored quasisymmetric functions}
\author[V.~D. Moustakas]{Vassilis~Dionyssis~Moustakas}
\email{\href{emailto:vd.moustakas@gmail.com}{vd.moustakas@gmail.com}}
\urladdr{\href{https://sites.google.com/view/vasmous}{https://sites.google.com/view/vasmous}}
\subjclass[2020]{Primary: 05E05, 05E10, 05A05, 05A15. Secondary: 20C30 .}
\thanks{The author was partially co-financed by Greece and the European Union (European Social Fund-ESF) through Operational Programme \textquote{Human Resources Development, Education and Lifelong Learning} in the context of the project "Strengthening Human Resources Research Potential via Doctorate Research 2nd Cycle" (MIS-5000432), Implemented by the State Scholarships Foundation (IKY)}
\begin{document}

\begingroup
\def\uppercasenonmath#1{} 
\let\MakeUppercase\relax 
\maketitle
\endgroup

\begin{abstract}

	The quasisymmetric generating function of the set of permutations whose inverses have a fixed descent set is known to be symmetric and Schur-positive. The corresponding representation of the symmetric group is called the descent representation. In this paper, we provide an extension of this result to colored permutation groups, where Gessel's fundamental quasisymmetric functions are replaced by Poirier's colored quasisymmetric functions. For this purpose, we introduce a colored analogue of zigzag shapes and prove that the representations associated with these shapes coincide with colored descent representations studied by Adin, Brenti and Roichman in the case of two colors and Bagno and Biagioli in the general case. Additionally, we provide a colored analogue of MaMahon's alternating formula which expresses ribbon Schur functions in the basis of complete homogeneous symmetric functions.  
	
\end{abstract}

\section{Introduction}
\label{sec:intro}

	The basis of Schur functions forms one of the most interesting basis of the space of symmetric functions \cite[Chapter~7]{StaEC2}. Schur functions appear in the representation theory of the symmetric group as characters of irreducible representations. A symmetric function is called Schur-positive if it is a linear combination of Schur functions with nonngegative coefficients. The problem of determining whether a given symmetric function is Schur-positive constitutes a major problem in algebraic combinatorics \cite{Sta00}.
	
	Adin and Roichman \cite{AR15} highlighted a connection between Schur-positivity of certain quasisymmetric generating functions and the existence of formulas which express the characters of interesting representations as weighted enumerations of nice combinatorial objects. Quasisymmetric functions are certain power series in infinitely many variables that generalize the notion of symmetric functions. They first appeared in the work of Stanley and were later defined and systematically studied by Gessel \cite{Ges84} (see also \cite{BM16}).
	
	An example of this connection of particular interest involves the quasisymmetric generating function of inverse descent classes of the symmetric group and the characters of Specht modules of zigzag shapes, often called descent representations (for all undefined terminology we refer to \cref{sec:pre}). Adin, Brenti and Roichman \cite{ABR05} studied descent representations by using the coinvariant algebra as a representation space and provided an extension to the hyperoctahedral group, which was later generalized to every complex reflection group by Bagno and Biagioli \cite{BB07}.
	
	Recently, Adin et al. \cite{AAER17} investigated an extension of the aforementioned connection to the hyperoctahedral setting, where Gessel's fundamental quasisymmetric functions were replaced by Poirier's signed quasisymmetric functions \cite{Poi98}. In particular, they proved \cite[Proposition~5.5]{AAER17} that the signed quasisymmetric generating function of signed inverse descent classes is Schur-positive in the hyperoctahedral setting, but without explicitly specifying the corresponding characters.
	
	Motivated by the afore-mentioned result, in this paper, we aim to extend upon it in the case of colored permutation groups, a special class of complex reflection groups. In particular, we prove that the colored quasisymmetric generating function of inverse colored descent classes is Schur-positive in the colored setting and show that the corresponding characters are precisely the characters of colored descent representations studied by Bagno and Biagioli (see \Cref{thm:mainA}). For this purpose, we suggest a colored analogue of Gessel's zigzag shape approach to descent representations. Furthermore, we provide a colored analogue of a well-known formula due to MacMahon, popularized by Gessel \cite{Ges84}, which expresses the Frobenius image of colored descent representations, usually called ribbon Schur functions, as an alternating sum of complete homogeneous symmetric functions in the colored context (see \Cref{thm:mainB}).
	
	The paper is structured as follows. \Cref{sec:pre} discusses background on permutations, tableaux, compositions, zigzag diagrams, symmetric/quasisymmetric functions and descent representations. \Cref{sec:color} reviews the combinatorics of colored compositions, colored permutations and colored quasisymmetric functions. \Cref{sec:zigzag-colored} introduces and studies the notion of colored zigzag shapes and \Cref{sec:char} proves the main results of this paper, namely \Cref{thm:mainA,thm:mainB}. 
	
\section{Preliminaries}
\label{sec:pre}

	This section fixes notation and discusses background. Throughout this paper we assume familiarity with basic concepts in the theory of symmetric functions and representations of the symmetric group as presented, for example, in \cite[Chapter~7]{StaEC2}. For a positive integer $n$, we write $[n]:=\{1,2,\dots,n\}$ and denote by $|S|$ the cardinality of a finite set $S$.
	
\subsection{Permutations, tableaux, compositions and zigzag diagrams}
\label{subsec:permutation-preliminaries}

	A \defn{composition} of a positive integer $n$ is a sequence $\aa = (\aa_1, \aa_2, \dots, \aa_k)$ of positive integers such that $\aa_1 + \aa_2 + \cdots + \aa_k = n$. Compositions of $n$ are in one-to-one correspondence with subsets of $[n-1]$. In particular, let $\rmS_\aa := \{r_1,r_2, \dots, r_{k-1}\}$ be the set of partial sums $r_i := \aa_1 + \aa_2 + \cdots +\aa_i$, for all $1 \le i \le k$. Conversely, given a subset $S = \{s_1 < s_2 < \cdots < s_k\} \subseteq [n-1]$, let $\rmco(S) = \left(s_1, s_2-s_1, \dots, s_k-s_{k-1}, n-s_k\right)$. The maps $\aa \mapsto \rmS_\aa$ and $S \mapsto \rmco(S)$ are bijections and mutual inverses.
	
	Sometimes, it will be convenient to work with subsets of $[n-1]$ which contain $n$. For this purpose, we will write $S^+ := S \cup\{n\}$. In this case, $\rmS^+_\aa = \{r_1,r_2, \dots, r_k\}$ and the maps $\aa \mapsto \rmS^+_\aa$ and $S^+ \mapsto \rmco(S^+)$ remain bijections and mutual inverses. We make this (non-standard) convention because we will later need to keep track of the color of the last coordinate of a colored permutation (see \Cref{subsec:colored-comp-set}).
	
	The set of all compositions of $n$, written $\rmComp(n)$, becomes a poset with the partial order of reverse refinement. The covering relations are given by
\[
(\aa_1, \dots, \aa_i + \aa_{i+1}, \dots, \aa_k) \prec 
(\aa_1, \dots, \aa_i,\aa_{i+1}, \dots, \aa_k).
\]
The corresponding partial order on the set of all subsets of $[n-1]$ is inclusion of subsets. A \defn{partition} of $n$, written $\lambda\vdash n$, is a composition $\ll$ of $n$ whose parts appear in weakly decreasing order.

	A \defn{zigzag diagram} (also called border-strip, ribbon or skew hook) is a connected skew shape that does not contain a $2\times2$ square. Ribbons with $n$ cells are in one-to-one correspondence with compositions of $n$. Given $\aa \in \rmComp(n)$, let $\rmZ_\aa$ be the ribbon with $n$ cells whose row lengths, when read from bottom to top, are the parts of $\aa$. For example, for $n=9$
\[
\aa = (2,1,2,3,1) 
\quad \longmapsto \quad 
\rmZ_\aa = \ytableausetup{centertableaux,smalltableaux}\ydiagram{4+1, 2+3, 1+2, 1+1, 2} \, .
\]

	Let $\fS_n$ be the symmetric group on $[n]$. We will think of permutations as words and write them in one-line notation $\pi = \pi_1\pi_2\cdots\pi_n$. The \defn{descent set} of $\pi \in \fS_n$ is defined by $\Des(\pi) := \{i \in [n-1]: \pi_i > \pi_{i+1}\}$. Also, let $\rmco(\pi) := \rmco(\Des(\pi))$ be the \defn{descent composition} of $\pi$. The descent composition of $\pi$ essentially records the lengths of increasing runs of $\pi$. For $\aa \in \rmComp(n)$, we define the \defn{descent class} 
\[
\rmD_\aa := \{\pi \in \fS_n : \rmco(\pi) = \aa\}
\]
and the corresponding \defn{inverse descent class}
\[
\rmD_\aa^{-1} := \{\pi \in \fS_n : \rmco(\pi^{-1}) = \aa\}.
\]
	
	Let $\SYT(\ll/\mu)$ be the set of all standard Young tableaux of a skew shape $\ll/\mu$. The \defn{descent set} of a standard Young tableau $Q\in\SYT(\ll/\mu)$, written $\Des(Q)$, is the set of all $i \in [n-1]$ such that $i+1$ appears in a lower row than $i$ does in $Q$. Also, we write $\rmco(Q) := \rmco(\Des(Q))$. It is well-known that permutations of $\fS_n$ are in one-to-one correspondence with standard Young tableaux of ribbon shape with $n$ cells. The following refinement of this fact explains the connection between (inverse) descent classes and tableaux of ribbon shape (see, for example, \cite[Propositions~3.5~and~10.12]{AR14}).
\begin{proposition}
\label{prop:ribbons-and-tableaux}
For every $\aa \in \rmComp(n)$, there exists a bijection $\SYT(\rmZ_\aa) \to \rmD_\aa$ with $Q \mapsto \pi$ such that $\Des(Q) = \Des(\pi^{-1})$. In particular, the distribution of the descent set is the same over $\rmD_\aa^{-1}$ and $\SYT(\rmZ_\aa)$.
\end{proposition}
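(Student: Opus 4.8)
The plan is to construct the bijection explicitly by \emph{reading} each tableau along the connected strip that forms the ribbon. For a fixed $\aa=(\aa_1,\dots,\aa_k)\in\rmComp(n)$, I would first fix the canonical traversal $c_1,c_2,\dots,c_n$ of the cells of $\rmZ_\aa$ that starts at the bottom-left cell and follows the border strip up to the top-right cell: within each row it moves from left to right, and between two consecutive rows it takes the single vertical step permitted by the overlap of the ribbon. Given $Q\in\SYT(\rmZ_\aa)$, I would define $\pi=\pi_1\pi_2\cdots\pi_n$ by letting $\pi_j$ be the entry of $Q$ in the cell $c_j$. Since the traversal visits the $k$ rows from bottom to top with lengths $\aa_1,\dots,\aa_k$, and since the entries of $Q$ increase along rows and decrease when moving up a column, the maximal increasing runs of $\pi$ are exactly the rows of $\rmZ_\aa$. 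Hence $\Des(\pi)=\rmS_\aa$, that is $\rmco(\pi)=\aa$, so $\pi\in\rmD_\aa$.

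To see that the map is a bijection, I would exhibit its inverse: given $\pi\in\rmD_\aa$, fill $c_j$ with $\pi_j$ and check that the result is a standard Young tableau. Rows increase because each row is an increasing run of $\pi$; for columns, the key geometric fact is that in a ribbon each column is a single contiguous vertical segment of the traversal, so the column-strictness condition amounts to demanding a descent of $\pi$ at every vertical step, and these steps occur precisely at the positions $r_1,\dots,r_{k-1}$ where $\pi$ does descend. The two constructions are then mutually inverse by definition.

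It remains to verify the descent identity $\Des(Q)=\Des(\pi^{-1})$, and here the main point is to translate the row-based definition of $\Des(Q)$ into the position-based definition of $\Des(\pi^{-1})$. I would observe that the traversal is monotone in the row index: a cell in a lower row always precedes, along the strip, a cell in a higher row. Consequently $i\in\Des(\pi^{-1})$ — i.e.\ the entry $i$ occurs after $i+1$ in $\pi$ — holds if and only if $i$ lies in a strictly higher row of $Q$ than $i+1$, the equal-row case being impossible since entries increase along rows. This is exactly the condition $i\in\Des(Q)$, so the identity follows. The ``in particular'' statement is then immediate, since $\{\pi^{-1}:\pi\in\rmD_\aa\}=\rmD_\aa^{-1}$ and descents are preserved.

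I expect the main obstacle to be the bookkeeping around multi-cell columns: one must confirm that consecutive rows of a ribbon overlap in exactly one column, and that each column is traversed as a single unbroken segment that ascends along the strip while decreasing in value, so that the descent structure of $\pi$ transfers cleanly to the column-strictness of $Q$. Once the geometry of the border strip is pinned down, both the run-length computation and the row-versus-position translation are routine.
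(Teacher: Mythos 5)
Your proposal is correct and matches the paper's approach: the paper states this result with a citation to Adin--Roichman and identifies the bijection as the reading word of the tableau (reading cell entries in the northeast direction from the southwestern corner), which is exactly the traversal map you construct. Your verifications --- that the increasing runs of the reading word are the rows, that column-strictness corresponds to descents at the partial sums $r_1,\dots,r_{k-1}$, and that the row-versus-position translation gives $\Des(Q)=\Des(\pi^{-1})$ --- are all sound, so nothing is missing.
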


	The resulting permutation of \Cref{prop:ribbons-and-tableaux} is often called the \defn{reading word} of the standard Young tableau $Q$, and it is the word obtained by reading the cell entries of $Q$ in the northeast direction, starting from the southwestern corner. For example, for $n=4$ and $\aa = (2,2)$ we have
\[
\ytableausetup{mathmode,centertableaux}
\begin{ytableau}
\none & {\darkcandyapplered2} & 4 \\
1 & 3 
\end{ytableau} 
\mapsto 1324
\quad
\begin{ytableau}
\none & 2 & {\darkcandyapplered3} \\
1 & 4 
\end{ytableau} 
\mapsto 1423
\quad
\begin{ytableau}
\none & {\darkcandyapplered1} & 4 \\
2 & 3 
\end{ytableau} 
\mapsto 2314
\quad
\begin{ytableau}
\none & {\darkcandyapplered1} & {\darkcandyapplered3} \\
2 & 4 
\end{ytableau} 
\mapsto 2413
\quad
\begin{ytableau}
\none & 1 & {\darkcandyapplered2} \\
3 & 4 
\end{ytableau} 
\mapsto 3412,
\]
where colored entries represent the descents of the corresponding tableaux and
\[
\rmD_\aa^{-1} = \{13{\darkcandyapplered\cdot}24, \, 134{\darkcandyapplered\cdot}2, \, 3{\darkcandyapplered\cdot}124, \, 3{\darkcandyapplered\cdot}14{\darkcandyapplered\cdot}2, \,34{\darkcandyapplered\cdot}12\},
\]
where colored dots represent the descents of the corresponding permutations.

\subsection{The characteristic map, quasisymmetric functions and descent representations}
\label{subsec:qsym-desrep}

	Let $\bx = (x_1, x_2, \dots)$ be a sequence of commuting indeterminates and consider the space $\Sym_n$ of homogeneous symmetric functions of degree $n$ in $\bx$. The \defn{Frobenius characteristic map}, written $\ch$, is a $\CC$-linear isomorphism from the space of virtual $\fS_n$-representations to $\Sym_n$. The characteristic map sends the irreducible $\fS_n$-representations corresponding to $\ll \vdash n$ to the \defn{Schur function} $s_\ll(\bx)$ associated to $\ll$ and in particular it maps non-virtual $\fS_n$-representations to Schur-positive symmetric functions.
	
	The \defn{fundamental quasisymmetric function} associated to $\aa \in \rmComp(n)$ is defined by
\[
F_\aa(\bx) := 
\sum_{\substack{1 \le i_1 \le i_2 \le \cdots \le i_n \\ j \in \rmS_\aa \, \then \, i_j < i_{j+1}}} x_{i_1}x_{i_2} \cdots x_{i_n}.
\]
We recall the following well-known expansion \cite[Theorem~7.19.7]{StaEC2}
\begin{equation}
\label{eq:SchurF}
s_{\ll/\mu}(\bx) = \sum_{Q \in \SYT(\ll/\mu)} F_{\rmco(Q)}(\bx),
\end{equation}
for any skew shape $\ll/\mu$.

	A subset $\aA \subseteq \fS_n$ is called \defn{Schur-positive} if the quasisymmetric generating function
\[
F(\aA;\bx) := \sum_{\pi \in \aA} F_{\rmco(\pi)}(\bx)
\]
is Schur-positive. In this case, it follows that $\ch(\varrho)(\bx) = F(\aA;\bx)$ for some non-virtual $\fS_n$-representation $\rho$ (see also \cite[Corollary~3.3]{AAER17}) and we will say that $\aA$ is Schur-positive for $\varrho$.

	The skew Schur function $r_\aa(\bx) := s_{\rmZ_\aa}(\bx)$ is called the \defn{ribbon Schur function} corresponding to $\aa \in \rmComp(n)$. The (virtual) $\fS_n$-representation $\varrho_\aa$ such that $\ch(\varrho_\aa)(\bx) = r_\aa(\bx)$ is called the \defn{descent representation} of the symmetric group. We remark that this definition is not the standard way to define descent representations in the literature. For more information on descent representations from a combinatorial representation-theoretic point of view we refer to \cite{ABR05}. For example, descent representations are non-virtual $\fS_n$-representations, as the following proposition explains. Combining \Cref{prop:ribbons-and-tableaux,eq:SchurF} yields the following result of Gessel \cite[Theorem~7]{Ges84} (see also \cite[Corollary~7.23.4]{StaEC2}).
\begin{proposition}
\label{prop:Gessel-ribbon}
For every $\aa \in \rmComp(n)$, 
\begin{equation}
\label{eq:RibbonSchur-to-Schur}
r_\aa(\bx) \ = \ 
F(\rmD_\aa^{-1}; \bx) \ = \ 
\sum_{\lambda \vdash n} \, c_\lambda(\aa) \, s_\lambda(\bx),
\end{equation}
where $c_\lambda(\aa)$ is the number of $Q \in \SYT(\lambda)$ such that $\rmco(Q) = \aa$. In particular, inverse descent classes are Schur-positive for descent representations.
\end{proposition}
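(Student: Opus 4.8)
The plan is to derive both equalities from \Cref{eq:SchurF} together with the descent-preserving bijection of \Cref{prop:ribbons-and-tableaux}: the first equality comes out directly, while the second is extracted by means of the Robinson--Schensted--Knuth (RSK) correspondence.

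For the first equality I would begin from the definition $r_\aa(\bx) = s_{\rmZ_\aa}(\bx)$ and apply \Cref{eq:SchurF} to the ribbon shape $\rmZ_\aa$, giving $r_\aa(\bx) = \sum_{Q \in \SYT(\rmZ_\aa)} F_{\rmco(Q)}(\bx)$. Composing the bijection $Q \mapsto \pi$ of \Cref{prop:ribbons-and-tableaux} with the inversion map $\pi \mapsto \pi^{-1}$ yields a bijection $\SYT(\rmZ_\aa) \to \rmD_\aa^{-1}$ along which $\Des(Q) = \Des(\pi^{-1})$, hence $\rmco(Q) = \rmco(\pi^{-1})$. Re-indexing the sum by this bijection turns it into $\sum_{\sigma \in \rmD_\aa^{-1}} F_{\rmco(\sigma)}(\bx) = F(\rmD_\aa^{-1}; \bx)$. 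This step is pure bookkeeping and should present no difficulty.

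For the second equality I would first record the companion expansion obtained by applying \Cref{eq:SchurF} to a straight shape, namely $s_\ll(\bx) = \sum_\bb c_\ll(\bb) F_\bb(\bx)$ for every $\ll \vdash n$, so that $(c_\ll(\bb))$ is exactly the transition matrix from the Schur basis to the fundamental basis. Since the $F_\bb$ are linearly independent, it suffices to match the coefficient of each $F_\bb$ on the two sides of the claim, that is, to prove $\#\{Q \in \SYT(\rmZ_\aa) : \rmco(Q) = \bb\} = \sum_{\ll \vdash n} c_\ll(\aa)\, c_\ll(\bb)$. Here RSK supplies a bijection between $\fS_n$ and pairs $(P,Q)$ of standard Young tableaux of a common shape $\ll \vdash n$, with the Sch\"utzenberger property $\Des(P) = \Des(w^{-1})$ and $\Des(Q) = \Des(w)$. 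Consequently the right-hand side counts the permutations $w \in \fS_n$ with $\rmco(w^{-1}) = \aa$ and $\rmco(w) = \bb$, that is, the elements of $\rmD_\aa^{-1}$ having descent composition $\bb$; by the equidistribution asserted in \Cref{prop:ribbons-and-tableaux} this count equals the left-hand side. Nonnegativity of the integers $c_\ll(\aa)$ then makes $r_\aa$ Schur-positive, so $\varrho_\aa$ is a genuine (non-virtual) representation and $\rmD_\aa^{-1}$ is Schur-positive for it, which is the final assertion.

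The main obstacle is the second equality; the first is immediate. Identifying the Schur coefficients of the ribbon function $r_\aa$ as $c_\ll(\aa)$ amounts to inverting the Schur-to-fundamental transition, and RSK is the cleanest tool for this (alternatively, one may simply cite the classical ribbon Schur expansion). I would take care to verify the exact orientation of the Sch\"utzenberger descent statements and of the bijection in \Cref{prop:ribbons-and-tableaux}, since a transposition there would interchange the roles of $w$ and $w^{-1}$ and spoil the matching of $\aa$ with the inverse class.
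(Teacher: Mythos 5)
Your proof is correct and follows essentially the same route as the paper: the first equality via the descent-preserving bijection of \Cref{prop:ribbons-and-tableaux} combined with \Cref{eq:SchurF}, and the second via the Robinson--Schensted correspondence with its descent compatibility, which is exactly how the paper argues (explicitly so in the proof of its colored analogue, \Cref{thm:mainA}). Your reformulation of the second step as coefficient matching in the basis of fundamental quasisymmetric functions is only a cosmetic reorganization of the same argument.
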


	Descent representations in disguised form appear in Stanley's work \cite{Sta82} on group actions on posets. If $\chi_\aa$ denotes the character of $\varrho_\aa$, then \cite[Theorem~4.3]{Sta82} is translated into the following alternating formula 
\begin{equation}
\label{eq:Stanley-desrep}
\chi_\aa = \sum_{\substack{\bb \in \rmComp(n) \\ \bb \preceq \aa}} (-1)^{\ell(\aa) - \ell(\bb)} \, 1_\bb \uparrow_{\fS_\bb}^{\fS_n},
\end{equation}
where
\begin{itemize}
\item $\ell(\aa)$ denotes the number of parts of $\aa$, called length of $\aa$
\item $\fS_\aa := \fS_{\aa_1}\times\fS_{\aa_2}\times \cdots$ denotes the Young subgroup corresponding to $\aa$
\item $1_n$ (resp. $1_\aa$) denotes the trivial $\fS_n$-character (resp. $\fS_\aa$-character)
\item $\uparrow$ denotes induction of characters.
\end{itemize}

	Taking the Frobenius image, \Cref{eq:Stanley-desrep} becomes
\begin{equation}
\label{eq:ribbon-to-completehomogeneous}
r_\aa(\bx) = \sum_{\substack{\bb \in \rmComp(n) \\ \bb \preceq \aa}} (-1)^{\ell(\aa) - \ell(\bb)} \, h_\bb(\bx),
\end{equation}
where $h_\bb(\bx)$ denotes the \defn{complete homogeneous} symmetric functions corresponding to $\bb$. As Gessel \cite[page~293]{Ges84} points out, MacMahon was the first to study ribbon Schur functions by means of \cref{eq:ribbon-to-completehomogeneous}.

	In our running example, for $n=4$ and $\aa = (2,2)$
\[
r_\aa(\bx) = 2F_{(2,2)}(\bx) + F_{(3,1)}(\bx) + F_{(1,3)}(\bx) + F_{(1,2,1)}(\bx) = s_{(2,2)}(\bx) + s_{(3,1)}(\bx),
\]
since the tableaux of shape $(2,2)$ and $(3,1)$ and descent set $\{2\}$ are
\[
\begin{ytableau}
1 & {\darkcandyapplered2} \\
3 & 4 
\end{ytableau} 
\quad
\text{and}
\quad
\ytableausetup{mathmode,centertableaux}
\begin{ytableau}
1 & {\darkcandyapplered2} & 4 \\
3 
\end{ytableau} \qquad
\]
respectively, which is also in agreement with
\[
r_\aa(\bx) = h_{(2,2)}(\bx) - h_{(4)}(\bx).
\]

\section{Combinatorics of colored objects}
\label{sec:color}

	This section reviews the combinatorics of colored objects including colored permutations, colored compositions, $r$-partite tableaux, colored quasisymmetric functions and a colored analogue of the characteristic map. For the corresponding notions in the case of two colors we refer the reader to \cite{AAER17}. We fix a positive integer $r$ and view the elements of $\ZZ_r$, the cyclic group of order $r$, as colors $0,1,\dots,r-1$, totally ordered by the natural order inherited by the integers. Also, we will write $i^j$ instead of $(i,j)$ to represent colored integers, where $i$ is the underlying integer and $j$ is the color. 
	
\subsection{Colored compositions and colored sets}
\label{subsec:colored-comp-set}

	An \defn{$r$-colored composition} of a positive integer $n$ is a pair $(\aa,\ee)$ such that $\aa \in \rmComp(n)$ and $\ee \in \ZZ_r^{\ell(\aa)}$ is a sequence of colors assigned to the parts of $\aa$. An \defn{$r$-colored subset} of $[n]$ is a pair $(S^+,\zeta)$ such that $S\subseteq [n-1]$ and $\zeta : S^+ \to \ZZ_r$ is a color map. For the examples, we will represent colored compositions (resp. sets) as ordered tuples (resp. sets) of colored integers. 
	
	Colored compositions of $n$ are in one-to-one correspondence with colored subsets of $[n]$. The correspondence is given as follows: Given a colored composition $(\aa,\ee)$, let $\sigma_{(\aa,\ee)} := (\rmS^+(\aa),\zeta)$ where $\zeta : \rmS^+(\aa) \to \ZZ_r$ is defined by $\zeta(r_i) := \ee_i$. Conversely, given a colored subset $(S^+,\zeta)$ with $S^+ = \{s_1 < \cdots < s_k < s_{k+1}=n\}$, let $\rmco(S^+,\zeta) = (\rmco(S),\ee)$ where $\ee \in \ZZ_r^k$ is defined by letting $\ee_i = \zeta(s_i)$, for all $1 \le i \le k$. For example, for $n=10$ and $r=4$
\[
\left( 2^{\zerocol 0}, 2^{\onecol 1} ,1^{\onecol 1} , 1^{\threecol 3}, 3^{\onecol 1} , 1^{\twocol 2}\right) \longleftrightarrow
\left\{ 2^{\zerocol0}, 4^{\onecol 1}, 5^{\onecol 1}, 6^{\threecol 3}, 9^{\onecol 1}, 10^{\twocol 2} \right\}.
\]

	Given a colored composition $(\aa,\ee)$ of $n$, we can extend $\ee$ to a color vector $\tl{\ee} \in \ZZ_r^n$ by letting 
\[
\tl{\ee} := (\underbrace{\ee_1, \ee_1, \dots, \ee_1}_{\text{$\aa_1$ times}}, \underbrace{\ee_2, \ee_2, \dots, \ee_2}_{\text{$\aa_2$ times}}, \dots, \underbrace{\ee_k, \ee_k, \dots, \ee_k}_{\text{$\aa_k$ times}}).
\]
Similarly, given a colored subset $(S^+,\zeta)$ of $[n]$ with $S^+ = \{s_1 < \cdots < s_k < s_{k+1}:=n\}$, we can extend the color map to a color vector $\tl{\zeta} = (\tl{\zeta}_1,\tl{\zeta}_2, \dots, \tl{\zeta}_n) \in \ZZ_r^n$, by letting $\tl{\zeta}_j := \zeta(s_i)$ for all $s_{i-1} < j \le s_i$ where $s_0:=0$. The corresponding color vector of our running example is
\[
({\zerocol 0}, {\zerocol 0}, {\onecol 1}, {\onecol 1}, {\onecol 1}, {\threecol 3}, {\onecol 1}, {\onecol 1}, {\onecol 1}, {\twocol 2}).
\]

	The set of all $r$-colored compositions of $n$, written $\rmComp(n,r)$, becomes a poset with the partial order of reverse refinement on consecutive parts of constant color. The covering relations are given by
\[
\left( 
(\dots, \aa_i + \aa_{i+1}, \dots),
(\dots, \ee_i, \dots)
\right) 
\prec
\left( 
(\dots, \aa_i, \aa_{i+1}, \dots),
(\dots, \ee_i, \ee_i, \dots)
\right).
\]
The corresponding partial order on $r$-colored subsets of $[n]$ is inclusion of subsets with the same color vector. Notice that these posets are not connected, since each color vector gives rise to a unique connected component (see, for example \cite[Figure~4]{HP10}).

\subsection{Colored permutations and $r$-partite tableaux}
\label{subsec:colored-perm-des}

	The wreath product $\ZZ_r\wr\fS_n$ is called the \defn{$r$-colored permutation group} and we denote it by $\fS_{n,r}$. It consists of all pairs $(\pi,\rzz)$, called \defn{$r$-colored permutations}, such that $\pi \in \fS_n$ is the underlying permutation and $\rzz = (\rzz_1, \rzz_2, \dots, \rzz_n) \in \ZZ_r^n$ is a color vector. When we consider specific examples, it will be convenient to write colored permutations in window notation, that is as words $\pi_1^{\rzz_1}\pi_2^{\rzz_2}\cdots\pi_n^{\rzz_n}$ on colored integers.
	
	The product in $\fS_{n,r}$ is given by the rule 
\[
(\pi, \rzz)(\tau, \rww) = \left(\pi\tau, \rww + \tau(\rzz)\right)
\]
where $\pi\tau$ is evaluated from right to left, $\tau(\rzz) := (\rzz_{\tau_1},\rzz_{\tau_2}, \dots, \rzz_{\tau_n})$ and the addition is coordinatewise modulo $r$. The inverse (resp. conjugate) of $(\pi,\rzz)$, written ${(\pi,\rzz)}^{-1}$ (resp. $\ol{(\pi,\rzz)}$) is the element $(\pi^{-1},-\pi^{-1}(\rzz))$ (resp. $(\pi,-\rzz)$). 

	Colored permutation groups can be viewed as complex reflection groups (see, for example, \cite[Sections~1-2]{BB07}). Therefore, $\fS_{n,r}$ can be realized as the group of all $n\times n$ matrices such that 
\begin{itemize}
\item the nonzero entries are $r$-th roots of unity, and 
\item there is exactly one nonzero entry in every row and every column.
\end{itemize}
For our purposes it is more convenient to view them as groups of colored permutations rather than groups of complex matrices. 
	
	The case $r=2$ is of particular interest. In this case, it is often customary to write $\fB_n := \fS_{n,2}$ and identify colors $0$ and $1$ with signs $+$ and $-$, respectively. $\fB_n$ coincides with the \defn{hyperoctahedral group}, the symmetry group of the $n$-dimensional cube. The hyperoctahedral group is a real reflection group and its elements are called \defn{signed permutations}. Much of what is presented in this paper is motivated by Adin et al.'s work \cite{AAER17} on character formulas and descents for $\fB_n$. 
	
	The \defn{colored descent set} of $(\pi,\rzz) \in \fS_{n,r}$, denoted by $\sDes(\pi,\rzz)$, is the pair $(S^+,\zeta)$ where
\begin{itemize}
\item $S$ consists of all $i \in [n-1]$ such that $\rzz_i \neq \rzz_{i+1}$ or $\rzz_i = \rzz_{i+1}$ and $i \in \Des(\pi)$
\item $\zeta : S^+ \to \ZZ_r$ is the map defined by $\zeta(i) = \rzz_i$ for all $i \in S^+$.
\end{itemize}
In words, the colored descent set records the ending positions of increasing runs of constant color together with their colors. Notice that the color vector of the colored descent set of $(\pi,\rzz)$ is the same as $\rzz$. For example, for $n=10$ and $r=4$
\[
\sDes\left( 2^{\threecol 3} 4^{\threecol 3} 6^{\onecol1} 1^{\onecol 1} 5^{\onecol 1} {10}^{\threecol 3} 3^{\onecol 1} 7^{\onecol 1} 9^{\onecol 1}8^{\zerocol 0}\right) =
\{2^{\threecol 3}, 3^{\onecol 1}, 5^{\onecol 1}, 6^{\threecol 3}, 8^{\onecol 1}, 9^{\onecol 1}, 10^{\zerocol 0}\}.
\]

	The $r$-colored composition which corresponds to the colored descent set $\sDes(\pi,\rzz)$ is called \defn{colored descent composition} of $(\pi,\rzz)$ and is denoted by $\rmco(\pi,\rzz)$. It records the lengths of increasing runs of constant color together with their colors. In our running example, we have
\[
\rmco\left( 2^{\threecol 3} 4^{\threecol 3} 6^{\onecol1} 1^{\onecol 1} 5^{\onecol 1} {10}^{\threecol 3} 3^{\onecol 1} 7^{\onecol 1} 9^{\onecol 1}8^{\zerocol 0}\right) =
\left(2^{\threecol 3}, 1^{\onecol 1}, 2^{\onecol 1}, 1^{\threecol 3}, 1^{\onecol 1}, 2^{\onecol 1}, 1^{\zerocol 0}\right). 
\]

	For $(\aa,\ee) \in \rmComp(n,r)$, we define the \defn{colored descent class}
\[
\rmD_{(\aa,\ee)} := \{ (\pi,\rzz) \in \fS_{n,r} : \rmco(\pi,\rzz) = (\aa,\ee)\}
\]
and the corresponding \defn{conjugate-inverse colored descent class}
\[
\ol{\rmD}_{(\aa,\ee)}^{-1} := \{ (\pi,\rzz) \in \fS_{n,r} : \rmco\left({\ol{(\pi,\rzz)}}^{-1}\right) = (\aa,\ee)\}.
\]
For reasons that will become apparent in the sequel, instead of dealing with inverse descent classes it will be more convenient to deal with conjugate-inverse descent classes. Colored descent classes were introduced by Mantaci and Reutenauer \cite{MR95} who called them shape classes and used them to introduce and study a colored analogue of Solomon's descent algebra. We remark that in the hyperoctahedral case, where we have only two colors, there is no need to consider conjugate-inverse elements because $\fB_n$ is a real reflection group.

	An \defn{$r$-partite partition} of $n$, written $\bll \vdash n$, is an $r$-tuple $\bll = (\ll^{(0)}, \ll^{(1)}, \dots, \ll^{(r-1)})$ of (possibly empty) integer partitions of total sum $n$. For example, 
\[
\bll \ = \ \left( (2), (3,2,1), (1), (1) \right). 
\]
is a 4-partite partition of $10$.

	A \defn{standard Young $r$-partite tableau} of shape $\bll$ is an $r$-tuple $\bQ = (Q^{(0)}, Q^{(1)}, \dots, Q^{(r-1)})$ of (possibly empty) tableaux, called parts, which are strictly increasing along rows and columns such that $Q^{(i)}$ has shape $\ll^{(i)}$ and every element of $[n]$ appears exactly once as an entry of some $Q^{(i)}$. We denote by $\SYT(\bll)$ the set of all standard Young $r$-partite tableaux of shape $\bll$. To each $r$-partite tableau $\bQ$, we associate a color vector $\rzz$, defined by letting $\rzz_i = j$, where $0 \le j \le r-1$ is such that $i \in Q^{(j)}$. For example, for $n=10$ and $r=4$
\[
\bQ 
\ = \ 
\ytableausetup{mathmode}
\left(\begin{ytableau}
1 & 9   
\end{ytableau}\, , \
\begin{ytableau}
3 & 5 & 6 \\
4 & 10 \\
7
\end{ytableau}\, , \
\begin{ytableau}
2
\end{ytableau}\, , \
\begin{ytableau}
8
\end{ytableau}
\right)
\]
has color vector 
\[
\rzz = 
\left(
{\zerocol 0}, {\twocol 2}, {\onecol 1}, {\onecol 1}, {\onecol 1}, {\onecol 1},{\onecol 1}, {\threecol 3}, {\zerocol 0}, {\onecol 1}
\right)
\]

	The \defn{colored descent set} of an $r$-partite tableau $\bQ$, denoted by $\sDes(\bQ)$, is defined similarly to that for colored permutations. In this case, the colored descent set records the first element of a pair $(i, i+1)$ together with its color, such that $i$ and $i+1$ either belong to parts with different colors or they belong to the same part and $i$ is a descent of this part. In our running example, 
\[
\sDes(\bQ) = 
\left\{
1^{\zerocol 0}, 
2^{\twocol 2}, 
3^{\onecol 1}, 
6^{\onecol 1}, 
7^{\onecol 1}, 
8^{\threecol 3}, 
9^{\zerocol 0}, 
10^{\onecol 1} 
\right\}.
\]
Also, we write $\rmco(\bQ) := \rmco(\sDes(\bQ))$.

\subsection{Colored quasisymmetric functions and the characteristic map}
\label{subsec:colored-qsym-ch}

	Consider $r$ copies $\bx^{(0)}, \bx^{(1)}, \dots, \bx^{(r-1)}$ of $\bx$, one for each color of $\ZZ_r$ and let $\Sym_n^{(r)}$ be the space of (homogeneous) formal power series of degree $n$ in $\bx^{(0)}, \bx^{(1)}, \dots, \bx^{(r-1)}$ which are symmetric in each variable $\bx^{(j)}$ separately. In particular,
\[
\Sym_n^{(r)} = 
\bigoplus_{\substack{a_0, \dots, a_{r-1} \in \NN \\ a_0 + \cdots + a_{r-1} = n}} 
\left( 
\Sym_{a_0}(\bx^{(0)}) \otimes \cdots \otimes \Sym_{a_{r-1}}(\bx^{(r-1)})\right). 
\]

	Drawing parallel to the classical case, for an $r$-partite partition $\bll = (\ll^{(0)}, \ll^{(1)}, \dots, \ll^{(r-1)})$, we define
\[
s_\bll := s_{\ll^{(0)}}(\bx^{(0)}) s_{\ll^{(1)}}(\bx^{(1)}) \cdots s_{\ll^{(r-1)}}(\bx^{(r-1)}).
\]
The set $\{s_\bll : \bll \vdash n\}$ forms a basis for $\Sym_n^{(r)}$ which we call the \defn{Schur basis}. An element of $\Sym_n^{(r)}$ is called \defn{Schur-positive} if all the coefficients in its expansion in the Schur basis are nonnegative.

	It is well-known that (complex) irreducible $\fS_{n,r}$-representations are indexed by $r$-partite partitions of $n$ (see, for example, \cite[Section~5]{BB07}). Poirier \cite{Poi98} introduced a colored analogue of the characteristic map which we denote by $\ch^{(r)}$. This map is a $\CC$-linear isomorphism from the space of virtual $\fS_{n,r}$-representations to $\Sym_n^{(r)}$ which sends the irreducible $\fS_{n,r}$-representation corresponding to $\bll \vdash n$ to $s_\bll$. In particular, it maps non-virtual $\fS_{n,r}$-representations to Schur-positive elements of $\Sym_n^{(r)}$.
	
	The \defn{colored} (\defn{fundamental}) \defn{quasisymmetric function}	associated to $(\aa,\ee) \in \rmComp(n,r)$ is defined by
\begin{equation}
\label{eq:colqsym-definition}
F_{(\aa,\ee)}^{(r)} := F_{(\aa,\ee)}(\bx^{(0)}, \dots, \bx^{(r-1)}) :=
\sum_{\substack{1 \le i_1 \le i_2 \le \cdots \le i_n \\ \ee_j \ge \ee_{j+1} \ \then \ i_{r_j} < i_{r_{j+1}}}} x_{i_1}^{(\tl{\ee_1})}x_{i_2}^{(\tl{\ee_2})} \cdots x_{i_n}^{(\tl{\ee_n})},
\end{equation}
where the second restriction in the sum runs through all indices $1 \le j \le \ell(\aa)-1$. For example, if $(m^n)$ denotes the vector (or sequence) of length $n$ and entries equal to $m$, then
\begin{align*}
F_{\left((n), (k)\right)}^{(r)} &= \sum_{1 \le i_1 \le i_2 \le \cdots \le i_n} x_{i_1}^{(k)}x_{i_2}^{(k)} \cdots x_{i_n}^{(k)} = h_n(\bx^{(k)}) \\
F_{\left((1^n), (k^n)\right)}^{(r)} &= \sum_{1 \le i_1 < i_2 < \cdots < i_n} x_{i_1}^{(k)}x_{i_2}^{(k)} \cdots x_{i_n}^{(k)} = e_n(\bx^{(k)}),
\end{align*}
where $h_n$ (resp. $e_n$) denotes the $n$-th \defn{complete homogeneous} (resp. \defn{elementary}) symmetric function.

	This colored analogue of Gessel's fundamental quasisymmetric function was introduced by Poirier \cite{Poi98} and has been studied by several people \cite{AAER17,BaH08,BeH06,HP10,Mou21}. It seems that this is particularly suitable when we consider colored permutation groups as wreath products. A different signed analogue of quasisymmetric functions was introduced by Chow \cite{Cho01} which has found applications when one considers the hyperoctahedral group as a Coxeter group (see, for example, \cite{BBJR20}). 
	  
	Steingr\'{i}msson \cite[Definition~3.2]{Stei94} introduced a notion of descents for colored permutations which reduces to the classical one and using it we can provide an alternative (and more convenient) description for colored quasisymmetric functions. The \defn{descent set} of $(\pi,\rzz) \in \fS_{n,r}$ is defined by 
\[
\Des(\pi,\rzz) := \{i \in [n]: \rzz_i > \rzz_{i+1} \ \text{or} \ \rzz_i = \rzz_{i+1} \, \text{and} \, i \in \Des(\pi)\},
\]
where $\pi_{n+1} := 0$ and $\rzz_{n+1} := 0$. In particular, $n \in \Des(\pi,\rzz)$ if and only if $\rzz_n > 0$. With this in mind, \Cref{eq:colqsym-definition} for the colored descent composition of $(\pi,\rzz)$ becomes
\begin{equation}
\label{eq:colqsym-actual-definition}
F_{(\pi,\rzz)}^{(r)} \, := \,
F_{\rmco(\pi,\rzz)}^{(r)} \, = 
\sum_{\substack{1 \le i_1 \le i_2 \le \cdots \le i_n \\ j \in \Des(\pi,\rzz) \mysetminus \{n\} \ \then \ i_j < i_{j+1}}} x_{i_1}^{(\rzz_1)}x_{i_2}^{(\rzz_2)} \cdots x_{i_n}^{(\rzz_n)}.
\end{equation}
Adin et al. \cite[Proposition~4.2]{AAER17} proved a signed analogue of \Cref{eq:SchurF}, which can be trivially extended to the general case.
\begin{proposition}
For $\bll \vdash n$, 
\begin{equation}
\label{eq:SchurF-colored}
s_\bll = \sum_{\bQ \in \SYT(\bll)} F_{\rmco(\bQ)}^{(r)}.
\end{equation}
\end{proposition}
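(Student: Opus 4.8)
The plan is to prove \eqref{eq:SchurF-colored} by exhibiting an explicit weight-preserving bijection -- a colored standardization map -- between the combinatorial objects that generate the two sides, exactly as Adin et al. do in the two-color case; the only genuinely new ingredient is the total order on the colors of $\ZZ_r$, which resolves ties across different parts. First I would expand the left-hand side using the definition $s_\bll = \prod_{c=0}^{r-1} s_{\ll^{(c)}}(\bx^{(c)})$ together with the monomial (semistandard) expansion of each factor, so that $s_\bll = \sum_{(T^{(0)},\dots,T^{(r-1)})} \prod_c (\bx^{(c)})^{T^{(c)}}$, the sum ranging over $r$-tuples of semistandard tableaux $T^{(c)}$ of shape $\ll^{(c)}$. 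By the reformulation \eqref{eq:colqsym-actual-definition}, the right-hand side is the sum, over $\bQ \in \SYT(\bll)$ and over globally weakly increasing sequences $1 \le i_1 \le \cdots \le i_n$ that satisfy $i_k < i_{k+1}$ whenever $k$ records a colored descent of $\bQ$ (that is, whenever $\rzz_k > \rzz_{k+1}$, or $\rzz_k = \rzz_{k+1}$ and $k+1$ lies in a lower row than $k$ within their common part), of the monomial $\prod_k x_{i_k}^{(\rzz_k)}$, where $\rzz$ is the color vector of $\bQ$. It therefore suffices to match these two monomial expansions bijectively.

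Given an $r$-tuple $(T^{(c)})_c$, I would totally order its $n$ cells by the following rule: primarily by increasing entry; among cells of equal entry, by increasing color; and among cells of equal entry lying in the same part, by the classical column-reading order used to standardize a single semistandard tableau. Labeling the cells $1,\dots,n$ in this order produces an $r$-partite filling $\bQ$, and recording the entry of the cell labeled $k$ as $i_k$ produces a weakly increasing sequence. Within each part the labeling is precisely the classical standardization of $T^{(c)}$, so each $Q^{(c)}$ is a standard tableau and $\bQ \in \SYT(\bll)$, with color vector assigning to each label the color of its part. The sequence is admissible because equal consecutive values can only be tied across strictly increasing colors -- which is not a colored descent -- or within a single part in classical column order, where equal entries never create a descent; hence every colored descent of $\bQ$ does force a strict increase.

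For the inverse map, given an admissible pair $(\bQ,(i_k))$ I would define $T^{(c)}$ by placing the value $i_m$ into the cell of $Q^{(c)}$ carrying the label $m$. Rows are then automatically weakly increasing, so the one substantive point -- and the place I expect the real work to lie -- is column strictness. If a cell with label $a$ sits directly above a cell with label $b$ in the same part $c$, I must show $i_a < i_b$. Assume instead $i_a = i_b$, so that $i_a = i_{a+1} = \cdots = i_b$ and no colored descent occurs in $[a,b-1]$; by the description of colored descents this forces $\rzz_a \le \rzz_{a+1} \le \cdots \le \rzz_b$. Since $\rzz_a = \rzz_b = c$, the colors are constant equal to $c$ on the whole block, so all of the labels $a,\dots,b$ lie in part $c$ and, having no within-part descent, their row indices weakly decrease as the label grows -- contradicting that $b$ lies in a strictly lower row than $a$. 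Hence $i_a < i_b$ and each $T^{(c)}$ is semistandard.

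Finally I would verify that the two maps are mutually inverse and weight preserving. Weight preservation is immediate, since both objects carry the monomial $\prod_c(\bx^{(c)})^{T^{(c)}} = \prod_k x_{i_k}^{(\rzz_k)}$. For mutual inversion one observes that on each maximal block of constant value the admissibility conditions force exactly the ``increasing color, then classical column order'' arrangement used by the forward map, so re-standardizing the reconstructed tableau recovers the original $\bQ$ and sequence. The argument is otherwise identical to the two-color case of \cite[Proposition~4.2]{AAER17}, with the natural total order on $\ZZ_r$ playing the role of the sign order, and so \eqref{eq:SchurF-colored} follows.
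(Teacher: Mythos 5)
Your argument is correct, but it is worth noting that the paper does not actually prove this proposition: it cites \cite[Proposition~4.2]{AAER17} for the two-color case and asserts that the general case is a trivial extension. Your standardization bijection supplies exactly the missing content, and it is the natural extension of the classical proof of \cite[Theorem~7.19.7]{StaEC2}: expand $s_\bll$ over $r$-tuples of semistandard tableaux, expand each $F^{(r)}_{\rmco(\bQ)}$ over admissible weakly increasing sequences via \eqref{eq:colqsym-actual-definition}, and match the two sums by standardizing with ties broken first by the total order on colors and then left to right within a part. The two substantive verifications --- that the forward map produces an admissible sequence, and that admissibility forces column-strictness and uniqueness of the preimage under the inverse map --- are both carried out correctly; the only step you leave implicit (that within a constant-value, constant-color block the no-descent condition makes consecutive labels move weakly up and hence strictly to the right, since in a standard tableau a larger entry cannot lie weakly northwest of a smaller one) is a one-line consequence of standardness. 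One terminological caution: what you call a ``colored descent'' of $\bQ$ is not the paper's $\sDes(\bQ)$, which by definition contains \emph{every} position where the color changes, including color ascents $\rzz_k < \rzz_{k+1}$; the strict inequality in $F^{(r)}_{\rmco(\bQ)}$ is forced only where the color weakly decreases, which is precisely the set described in your parenthetical. Your translation of the right-hand side into monomials is therefore correct, but the clash with the terminology of \Cref{subsec:colored-perm-des} should be repaired if this argument is to sit alongside the paper's definitions.
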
 

	Finally, a subset $\aA \subseteq \fS_{n,r}$ is called \defn{Schur-positive} if the colored quasisymmetric generating function
\[
F^{(r)}(\aA) := \sum_{(\pi,\rzz) \in \aA} F_{(\pi,\rzz)}^{(r)}
\]
is a Schur-positive element of $\Sym_n^{(r)}$. In this case, it follows that $\ch^{(r)}(\varrho)(\bx) = F^{(r)}(\aA)$ for some non-virtual $\fS_{n,r}$-representation $\varrho$ (see also \cite[Corollary~3.7]{AAER17}) and we will say that $\aA$ is Schur-positive for $\varrho$.

\section{Introducing colored zigzag shapes}
\label{sec:zigzag-colored}

	This section introduces the notion of colored zigzag shapes and proves several properties which will be needed in the sequel.
	
	Following Bergeron and Hohlweg \cite[Section~2.1]{BeH06} (see also \cite[Section~3.6]{HP10}), the \defn{rainbow decomposition} of a colored composition $(\aa,\ee) \in \rmComp(n,r)$ is the unique concatenation $(\aa_{(1)},\ee_{(1)})(\aa_{(2)},\ee_{(2)})\cdots(\aa_{(m)},\ee_{(m)})$ of non-empty, monochromatic colored compositions $\aa_{(i)}$ of color $ \ee_{(i)}$ such that $\ee_{(i)} \neq \ee_{(i+1)}$ for all $1 \le i \le m-1$. For example, for $n=10$ and $r=4$
\[
\left( 2^{\zerocol 0}, 2^{\onecol 1} ,1^{\onecol 1} , 1^{\threecol 3}, 3^{\onecol 1} , 1^{\twocol 2}\right)
\ = \ 
{(2)}^{\zerocol 0}{(2,1)}^{\onecol 1}{(1)}^{\threecol 3}{(3)}^{\onecol 1}{(1)}^{\twocol 2}.
\]
Notice that each $\ee_{(i)}$ is a single color rather than a sequence of colors.
\begin{definition}
\label{def:zigzag-colored}
An \defn{$r$-colored zigzag shape} with $n$ cells is a pair $(Z,\ee)$, where $Z = (Z_1, \dots, Z_k)$ is a sequence of zigzag diagrams and $\ee = (\ee_1, \dots, \ee_k) \in \ZZ_r^k$ is a sequence of colors assigned to the parts of $Z$ such that $\ee_i \ne \ee_{i+1}$ for every $1 \le i \le k-1$.
\end{definition}

	For example, there exist six 2-colored zigzag shapes with 2 cells
\[
\left(\,\ydiagram{2}\, ,{\zerocol 0}\right), \ 
\left(\,\ydiagram{2}\, , {\onecol 1}\right), \
\left(\left(\, \ydiagram{1}\, ,\ydiagram{1}\, \right), ({\zerocol 0}, {\onecol 1})\right), \ 
\left(\left(\, \ydiagram{1}\, ,\ydiagram{1}\, \right), ({\onecol 1},{\zerocol 0})\right), \
\left(\,\ydiagram{1,1}\, ,{\zerocol 0}\right), \ 
\left(\,\ydiagram{1,1}\, ,{\onecol 1}\right).
\]
In general, as the following proposition suggests, the number of $r$-colored zigzag shapes with $n$ cells is equal to  $r(r+1)^{n-1}$, the cardinality of $\rmComp(n,r)$ (see \cite[Table~1]{HP10}).
\begin{proposition}
\label{prop:colzigzag-colcomp}
The set of $r$-colored zigzag shapes with $n$ cells is in one-to-one correspondence with $\rmComp(n,r)$ and therefore with the set of all $r$-colored subsets of $[n]$.
\end{proposition}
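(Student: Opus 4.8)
The plan is to build the bijection by composing two correspondences that are already available: the classical identification of ribbons having $m$ cells with compositions of $m$ (recalled in \Cref{subsec:permutation-preliminaries}), and the existence and uniqueness of the rainbow decomposition of a colored composition. The defining constraint of a colored zigzag shape — that consecutive colors differ — is exactly the constraint that characterizes a rainbow decomposition, so the two structures should match block by block.

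First I would define a map $\Phi$ from $r$-colored zigzag shapes to $\rmComp(n,r)$. Given $(Z,\ee)$ with $Z = (Z_1,\dots,Z_k)$, let $n_i$ be the number of cells of $Z_i$ and let $\aa_{(i)} \in \rmComp(n_i)$ be the unique composition with $\rmZ_{\aa_{(i)}} = Z_i$; assign the single color $\ee_i$ to every part of $\aa_{(i)}$, producing the monochromatic colored composition $(\aa_{(i)},\ee_i)$. Concatenating these yields a colored composition of $n = n_1 + \cdots + n_k$, and the defining condition $\ee_i \ne \ee_{i+1}$ guarantees that the result is already exhibited in rainbow-decomposed form. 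Set $\Phi(Z,\ee)$ to be this colored composition. Conversely, I would define $\Psi$ on $\rmComp(n,r)$ via the rainbow decomposition: writing $(\aa,\ee)$ as $(\aa_{(1)},\ee_{(1)})\cdots(\aa_{(m)},\ee_{(m)})$ with each $\aa_{(i)}$ monochromatic of color $\ee_{(i)}$ and $\ee_{(i)} \ne \ee_{(i+1)}$, I send it to the pair $((\rmZ_{\aa_{(1)}},\dots,\rmZ_{\aa_{(m)}}),(\ee_{(1)},\dots,\ee_{(m)}))$, which is an $r$-colored zigzag shape by \Cref{def:zigzag-colored} since the colors strictly alternate.

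The remaining step is to verify that $\Phi$ and $\Psi$ are mutually inverse, and here the key input is the \emph{uniqueness} of the rainbow decomposition. Since $\Phi(Z,\ee)$ is exhibited in rainbow form, applying $\Psi$ recovers exactly the blocks $\aa_{(i)}$ together with their colors $\ee_i$, and converting each block back to a ribbon through the ribbon–composition bijection returns $Z_i$; the composition $\Phi \circ \Psi$ is the identity for the same reason. I do not expect a genuine obstacle: the only point requiring care is to confirm that the block structure produced by reading off the ribbons of a colored zigzag shape coincides with the block structure of the rainbow decomposition, which follows because both are governed by the identical alternating-colors condition. Finally, the second assertion is immediate: colored compositions of $n$ are in bijection with $r$-colored subsets of $[n]$ via $(\aa,\ee) \mapsto \sigma_{(\aa,\ee)}$ (\Cref{subsec:colored-comp-set}), so composing this with $\Phi$ gives the stated correspondence with $r$-colored subsets.
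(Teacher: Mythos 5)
Your proposal is correct and follows essentially the same route as the paper: the paper's proof also sends a colored composition, via its rainbow decomposition, to the sequence of ribbons $\rmZ_{\aa_{(i)}}$ with colors $\ee_{(i)}$ (your map $\Psi$) and declares this the bijection. You merely spell out the inverse map $\Phi$ and the verification via uniqueness of the rainbow decomposition, which the paper leaves implicit.
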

\begin{proof}
Given a colored composition of $n$ with rainbow decomposition
\[
(\aa,\ee) = (\aa_{(1)},\ee_{(1)})(\aa_{(2)},\ee_{(2)})\cdots(\aa_{(m)},\ee_{(m)})
\]
we form the following colored zigag shape with $n$ cells
\[
\rmZ_{(\aa,\ee)} := 
\left( 
\left(\rmZ_{\aa_{(1)}}, \rmZ_{\aa_{(2)}}, \dots, \rmZ_{\aa_{(m)}}\right), \left(\ee_{(1)}, \ee_{(2)}, \dots, \ee_{(m)}\right)
\right).
\]
The map $(\aa,\ee) \mapsto \rmZ_{(\aa,\ee)}$ is the desired bijection.
\end{proof}

	For example, the corresponding 4-colored zigzag shape with 10 cells to the 4-colored composition of our running example is
\[
\left( 2^{\zerocol 0}, 2^{\onecol 1} ,1^{\onecol 1} , 1^{\threecol 3}, 3^{\onecol 1} , 1^{\twocol 2}\right) \longleftrightarrow
\left(
\left(
\,
\ydiagram{2}
\, ,
\,
\ydiagram{1+1,2}
\, ,
\,
\ydiagram{1}
\, ,
\,
\ydiagram{3}
\, ,
\,
\ydiagram{1}
\, 
\right),
({\zerocol 0}, {\onecol 1}, {\threecol 3}, {\onecol 1}, {\twocol 2})
\right).
\]

	Now, to each colored zigzag shape we can associate an $r$-partite (skew) shape and consider standard Young $r$-partite tableaux of this shape. In particular, given an $r$-colored zigzag shape $(Z,\ee)$ we define the $r$-partite skew shape $\bll_{(Z,\ee)} := \left(Z^{(0)}, Z^{(1)}, \dots, Z^{(r-1)}\right)$, where 
\[
Z^{(j)} := \bigoplus_{\substack{1 \le i \le k \\ \ee_i = j}} Z_i
\]
for all $0 \le j \le r-1$. Here, the \defn{direct sum} $\ll \oplus \mu$ of two (skew) shapes $\ll, \mu$ is the skew shape whose diagram is obtained by placing the diagram of $\ll$ and $\mu$ in such a way that the upper-right vertex of $\ll$ coincides with the lower-left vertex of $\mu$. In our running example, we have
\[
\left(
\,
\ydiagram{2}
\, ,
\,
\ydiagram{2+3,1+1,2}
\, ,
\,
\ydiagram{1}
\, ,
\,
\ydiagram{1}
\, 
\right).
\]
Notice that two different $r$-colored zigzag shapes can give rise to the same $r$-partite skew shape. The following proposition provides a colored analogue of \cref{prop:ribbons-and-tableaux}, which will be used in \Cref{sec:char}.
\begin{proposition}
\label{prop:coloredzz-and-coltab}
For every $(\aa,\ee) \in \rmComp(n,r)$, there exists a bijection $\rmD_{(\aa,\ee)} \to \SYT(\bll_{\rmZ_{(\aa,\ee)}})$ with $(\pi,\rzz) \mapsto \bQ$ such that 
\[
\sDes(\bQ) = \sDes\left({\ol{(\pi,\rzz)}}^{-1}\right).
\]
In particular, the distribution of the colored descent set is the same over $\ol{\rmD}_{(\aa,\ee)}^{-1}$ and $\SYT(\bll_{\rmZ_{(\aa,\ee)}})$.
\end{proposition}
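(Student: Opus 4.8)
The plan is to realize the bijection as a colored analogue of the reading-word map of \Cref{prop:ribbons-and-tableaux}, carried out blockwise along the rainbow decomposition $(\aa,\ee) = (\aa_{(1)},\ee_{(1)})\cdots(\aa_{(m)},\ee_{(m)})$. First I would record the structural fact that every $(\pi,\rzz)\in\rmD_{(\aa,\ee)}$ is forced to have color vector $\rzz=\tl{\ee}$, and that its window $\pi_1^{\rzz_1}\cdots\pi_n^{\rzz_n}$ splits into the increasing runs of constant color prescribed by $(\aa,\ee)$. Grouping these runs by the rainbow decomposition, the runs inside block $i$ are monochromatic of color $\ee_{(i)}$ and are separated by genuine descents of $\pi$, whereas passing from block $i$ to block $i+1$ is a color change. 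The map $\rmD_{(\aa,\ee)}\to\SYT(\bll_{\rmZ_{(\aa,\ee)}})$ then sends $(\pi,\rzz)$ to the tableau $\bQ$ obtained by filling, for each block $i$, the ribbon $\rmZ_{\aa_{(i)}}$ (viewed as the $i$-th summand of the part of color $\ee_{(i)}$) so that the $j$-th run of block $i$ occupies the $j$-th row from the bottom. The inverse is the colored reading word: read $\rmZ_{\aa_{(1)}},\dots,\rmZ_{\aa_{(m)}}$ in block order, each by rows from the bottom, coloring every entry by the color of its ribbon.

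To show these maps are well defined and mutually inverse, I would invoke \Cref{prop:ribbons-and-tableaux} block by block: the within-block descents are exactly what makes each ribbon filling column-increasing, so each summand is standard, and since distinct summands of a direct sum share neither a row nor a column, there is no constraint linking them; hence each part is a genuine standard filling and the reading word lands back in $\rmD_{(\aa,\ee)}$. For the descent identity I would first compute, from the group law, that $\ol{(\pi,\rzz)}^{-1}=(\pi^{-1},\pi^{-1}(\rzz))$, whose color at position $i$ equals the color of the entry $i$ in $(\pi,\rzz)$, which is precisely the color of the part of $\bQ$ containing $i$. Thus both colored descent sets carry the same color map, and only the underlying sets remain to be matched.

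The matching is a comparison, for each $i$, of two criteria: $i\in\sDes\bigl(\ol{(\pi,\rzz)}^{-1}\bigr)$ iff the colors of $i$ and $i+1$ differ, or agree and $i+1$ precedes $i$ in the reading word; while $i\in\sDes(\bQ)$ iff $i$ and $i+1$ lie in parts of different colors, or in the same part with $i+1$ in a strictly lower row. The color-change cases coincide at once. When $i,i+1$ share a color but come from different ribbon summands, both criteria reduce to the single inequality comparing the two block indices: on the permutation side because the reading word processes blocks in increasing order, and on the tableau side because in a direct sum the summand of smaller block index sits strictly lower. When $i,i+1$ lie in the same ribbon, the equivalence is exactly the classical one supplied by \Cref{prop:ribbons-and-tableaux}.

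Finally, the ``in particular'' assertion follows because $g\mapsto\ol{g}^{-1}$ is an involution of $\fS_{n,r}$ carrying $\rmD_{(\aa,\ee)}$ bijectively onto $\ol{\rmD}_{(\aa,\ee)}^{-1}$, so the established bijection transports the $\sDes$-distribution on $\SYT(\bll_{\rmZ_{(\aa,\ee)}})$ to the one on $\ol{\rmD}_{(\aa,\ee)}^{-1}$. I expect the main obstacle to be the descent-preservation step, and within it the same-color/different-summand case: this is the genuinely colored phenomenon with no counterpart in the classical ribbon setting, and it hinges on reconciling the vertical placement of summands in the direct sum with the block order used by the reading word, which in turn requires getting the orientation conventions of inverse, conjugate, and direct sum exactly right.
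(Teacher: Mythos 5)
Your proposal is correct and takes essentially the same route as the paper's own proof: rainbow decomposition of both $(\aa,\ee)$ and $(\pi,\rzz)$, blockwise application of \Cref{prop:ribbons-and-tableaux}, direct sums of the resulting tableaux by color, and the observation that $\ol{(\pi,\rzz)}^{-1}=(\pi^{-1},\pi^{-1}(\rzz))$ forces the two colored descent sets to share a color vector, reducing the descent identity to the constant-color cases. If anything, your explicit three-case matching (color change, same color in different ribbon summands, same ribbon) spells out a point the paper's verification treats more tersely.
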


	To prepare for the proof, we remark that we can define the rainbow decomposition of any word (or sequence) of colored integers. In particular, the rainbow decomposition of a colored permutation $(\pi,\rzz) \in \fS_{n,r}$ is the unique concatenation $(\pi_{(1)},\rzz_{(1)})(\pi_{(2)},\rzz_{(2)})\cdots(\pi_{(m)},\rzz_{(m)})$ of non-empty, monochromatic permutations $\pi_{(i)}$ of color $\rzz_{(i)}$ such that $\rzz_{(i)} \neq \rzz_{(i+1)}$ for all $1 \le i \le m-1$. For example, for $n=10$ and $r=4$
\[
2^{\zerocol 0} 3^{\zerocol 0} 7^{\onecol 1} {10}^{\onecol 1} 5^{\onecol 1} 6^{\threecol 3} 1^{\onecol 1} 8^{\onecol 1} 9^{\onecol 1} 4^{\twocol 2}
\ = \ 
(23)^{\zerocol 0} (7\,{10}\,5)^{\onecol 1} (6)^{\threecol 3} (189)^{\onecol 1} (4)^{\twocol 2}.
\]
With this in mind, the colored descent composition of $(\pi,\rzz)$ turns out to be the (unique) colored composition with the following rainbow decomposition
\[
\rmco(\pi,\rzz) \ = \ 
(\rmco(\pi_{(1)}), \rzz_{(1)})(\rmco(\pi_{(2)}), \rzz_{(2)})\cdots(\rmco(\pi_{(m)}), \rzz_{(m)}).
\]

\begin{proof}[Proof of \Cref{prop:coloredzz-and-coltab}]
	Let $(\aa,\ee) \in \rmComp(n,r)$ with rainbow decomposition
\[
(\aa,\ee) = (\aa_{(1)},\ee_{(1)})(\aa_{(2)},\ee_{(2)})\cdots(\aa_{(m)},\ee_{(m)}).
\]
Given $(\pi,\rzz) \in \rmD_{(\aa,\ee)}$, the discussion of the preceding paragraph implies that its rainbow decomposition satisfies $\rmco(\pi_{(i)}) = \aa_{(i)}$ and $\rzz_{(i)} = \ee_{(i)}$, for all $1 \le i \le m$. Applying \cref{prop:ribbons-and-tableaux} yields a standard Young tableaux $Q_{(i)}$ of shape $\rmZ_{\aa_{(i)}}$ corresponding to each $\pi_{(i)}$, for all $1 \le i \le m$. Now, define an $r$-partite tableau $\bQ = (Q^{(0)}, Q^{(1)}, \dots, Q^{(r-1)})$ where 
\[
Q^{(j)} = \bigoplus_{\substack{1 \le i \le m \\ \ee_{(i)} = j}} Q_{(i)}
\]
for all $0 \le j \le r-1$. The shape of $\bQ$ is $\bll_{\rmZ_{(\aa,\ee)}}$, since 
\[
\text{shape of $Q^{(j)}$}
\ = \ 
\bigoplus_{\substack{1 \le i \le m \\ \ee_{(i)} = j}} \rmZ_{\aa_{(i)}}.
\]
The process can be reversed in a unique way and thus yielding the required bijection.

	For the second assertion, suppose $(\pi,\rzz) \mapsto \bQ$. On the one hand, since $\ol{(\pi,\rzz)}^{-1} = (\pi^{-1},\pi^{-1}(\rzz))$ the color vector of $\sDes(\ol{(\pi,\rzz)}^{-1})$ is equal to $\pi^{-1}(\rzz)$. On the other hand, the $i$-th entry of the color vector $\tl{\zeta}$ of $\sDes(\bQ)$ records the color of the part of $\bQ$ in which $i$ belongs and therefore the way we defined $\bQ$ implies $\tl{\zeta} = \pi^{-1}(\rzz)$. These observations imply that $\sDes(\ol{(\pi,\rzz)}^{-1})$ and $\sDes(\bQ)$ have the same color vector and therefore they record the same changes of colors. It remains to examine what happens in the case of constant color. Suppose that the second component of $\sDes(\ol{(\pi,\rzz)}^{-1})$ is $(\rzz_{i_1}, \rzz_{i_2}, \dots, \rzz_{i_k})$, for some $1 \le i_1 < i_2 < \cdots < i_k \le n$. If $\rzz_{i_j} = \rzz_{i_{j+1}}$, then $i_j \in \Des(\pi^{-1})$ which implies that $i_j$ and $i_{j+1}$ belong to the same part $Q^{(\rzz_{i_j})}$ of $\bQ$ and that $i_j \in \Des(Q^{(\rzz_{i_j})})$ which concludes the proof.
\end{proof}

\begin{example}
\label{ex:illustrating-the-proof}
We illustrate the previous proof in a specific example for $n=10$ and $r=4$. Suppose 
\[
(\aa,\ee) \ = \
\left( 2^{\zerocol 0}, 2^{\onecol 1} ,1^{\onecol 1} , 1^{\threecol 3}, 3^{\onecol 1} , 1^{\twocol 2}\right)
\ = \ 
(2^{\zerocol 0})(2^{\onecol 1} ,1^{\onecol 1})(1^{\threecol 3})(3^{\onecol 1})(1^{\twocol 2}).
\]
As we have already computed, its corresponding colored zigzag shape is
\[
\rmZ_{(\aa,\ee)} \ = \ 
\left(
\left(
\,
\ydiagram{2}
\, ,
\,
\ydiagram{1+1,2}
\, ,
\,
\ydiagram{1}
\, ,
\,
\ydiagram{3}
\, ,
\,
\ydiagram{1}
\, 
\right),
({\zerocol 0}, {\onecol 1}, {\threecol 3}, {\onecol 1}, {\twocol 2})
\right)
\]
and thus it corresponds to the following 4-partite skew shape
\[
\bll_{\rmZ_{(\aa,\ee)}} \ = \ 
\left(
\,
\ydiagram{2}
\, ,
\,
\ydiagram{2+3,1+1,2}
\, ,
\,
\ydiagram{1}
\, ,
\,
\ydiagram{1}
\, 
\right).
\]
Now, we pick an element of $\rmD_{(\aa,\ee)}$
\[ 
(\pi,\rzz) \ = \
2^{\zerocol 0} 3^{\zerocol 0} 7^{\onecol 1} {10}^{\onecol 1} 5^{\onecol 1} 6^{\threecol 3} 1^{\onecol 1} 8^{\onecol 1} 9^{\onecol 1} 4^{\twocol 2} 
\ = \ 
 (23)^{\zerocol 0} (7\,{10}\,5)^{\onecol 1} (6)^{\threecol 3} (189)^{\onecol 1} (4)^{\twocol 2}
\]
and form the tableaux
\[
Q_{(1)} = 
\ytableausetup{mathmode}
\begin{ytableau}
2 & 3   
\end{ytableau}
\, , 
\quad
Q_{(2)} = 
\begin{ytableau}
\none & 5 \\
7 & 10  
\end{ytableau}
\, , 
\quad
Q_{(3)} = 
\begin{ytableau}
6
\end{ytableau}
\, , 
\quad
Q_{(4)} = 
\begin{ytableau}
1 & 8 & 9    
\end{ytableau}
\, , 
\quad
Q_{(5)} = 
\begin{ytableau}
4   
\end{ytableau}
\]
with corresponding colors 
\[
\ee_{(1)} \ = \ {\zerocol 0}, 
\quad
\ee_{(2)} \ = \ {\onecol 1}, 
\quad
\ee_{(3)} \ = \ {\threecol 3}, 
\quad
\ee_{(4)} \ = \ {\onecol 1}, 
\quad
\ee_{(5)} \ = \ {\twocol 2}.
\]
Taking the direct sum of tableaux of the same color yields the following 4-partite tableau
\[
\bQ \ = \ 
\ytableausetup{mathmode}
\left(
\begin{ytableau}
2 & 3   
\end{ytableau}\, , \
\begin{ytableau}
\none & \none & 1 & 8 & 9 \\
\none & 5 \\
7 & 10
\end{ytableau}\, , \
\begin{ytableau}
4   
\end{ytableau}\, , \
\begin{ytableau}
6
\end{ytableau}
\right)
\]
with colored descent set
\[
\sDes(\bQ) \ = \ 
\left\{
1^{\onecol 1}, 
3^{\zerocol 0}, 
4^{\twocol 2}, 
5^{\onecol 1}, 
6^{\threecol 3}, 
9^{\onecol 1}, 
10^{\onecol 1}
\right\}
\]
which coincides with the colored descent set of the conjugate-inverse of $(\pi,\rzz)$
\[
\ol{(\pi,\rzz)}^{-1} \ = \ 
7^{\onecol 1} 1^{\zerocol 0} 2^{\zerocol 0} {10}^{\twocol 2} 5^{\onecol 1} 6^{\threecol 3} 3^{\onecol 1} 8^{\onecol 1} 9^{\onecol 1} 4^{\onecol 1}.
\]
\end{example}

\section{Character formulas for colored descent representations}
\label{sec:char}

	This section studies colored descent representations in the context of colored zigzag shapes and proves the main results of this paper. In particular, \Cref{thm:mainA} proves that the colored quasisymmetric generating function of conjugate-inverse colored descent classes is Schur-positive and equals the Frobenius image of colored descent representations. \Cref{thm:mainB} provides an alternating formula for the latter in terms of complete homogeneous symmetric functions in the colored context.
	
	Bagno and Biagioli \cite[Section~8]{BB07} studied colored descent representations using the coinvariant algebra as a representation space, extending the techniques of Adin, Brenti and Roichman \cite{ABR05}. We are going to define colored descent representations by means of colored zigzag shapes and prove that the two descriptions coincide by providing the decomposition into irreducible $\fS_{n,r}$-representations.
\begin{definition}
\label{def:ribbonSchur-colored-and-descentrep-colored}
Let $(\aa,\ee)$ be an $r$-colored composition of $n$ with rainbow decomposition $(\aa,\ee) = (\aa_{(1)},\ee_{(1)})(\aa_{(2)},\ee_{(2)})\cdots$ $(\aa_{(m)},\ee_{(m)})$. The element 
\[
r_{(\aa,\ee)} := r_{(\aa,\ee)}(\bx^{(0)},\bx^{(1)}, \dots, \bx^{(r-1)}) := r_{\aa_{(1)}}(\bx^{(\ee_{(1)})})r_{\aa_{(2)}}(\bx^{(\ee_{(2)})})\cdots r_{\aa_{(m)}}(\bx^{(\ee_{(m)})})
\]
of $\Sym_n^{(r)}$ is called the \defn{colored ribbon Schur function} corresponding to $(\aa,\ee)$ and the (virtual) $\fS_{n,r}$-representation $\varrho_{(\aa,\ee)}$ such that 
\[
\ch^{(r)}(\varrho_{(\aa,\ee)}) = r_{(\aa,\ee)}
\]
is called the \defn{colored descent representation} corresponding to $(\aa,\ee)$.
\end{definition}

	For example, for $n=10$ and $r=4$
\[
r_{(2^{\zerocol 0}, 2^{\onecol 1} ,1^{\onecol 1} , 1^{\threecol 3}, 3^{\onecol 1} , 1^{\twocol 2})} = 
r_{(2)}(\bx^{({\zerocol 0})})
r_{(2,1)}(\bx^{({\onecol 1})})
r_{(1)}(\bx^{({\threecol 3})})
r_{(3)}(\bx^{({\onecol 1})})
r_{(1)}(\bx^{({\twocol 2})}). 
\]
The first part of following theorem shows that colored descent representations are actually non-virtual and coincide with the ones studied by Bagno and Biagioli \cite[Theorem~10.5]{BB07}, while the second part extends and complements Adin et al.'s \cite[Proposition~5.5(i)]{AAER17} to general colored permutation groups.
\begin{theorem}
\label{thm:mainA}
For every $(\aa,\ee) \in \rmComp(n,r)$,
\begin{equation}
\label{eq:coloredRibbonSchur-to-Schur}
r_{(\aa,\ee)} \ = \ 
F^{(r)}(\ol{\rmD}_{(\aa,\ee)}^{-1}) \ = \ 
\sum_{\bll \vdash n} c_{\bll}(\aa,\ee) \, s_{\bll},
\end{equation}
where $c_{\bll}(\aa,\ee)$ is the number of $\bQ \in \SYT(\bll)$ such that $\rmco(\bQ) = (\aa,\ee)$. In particular, conjugate-inverse colored descent classes are Schur-positive for colored descent representations.
\end{theorem}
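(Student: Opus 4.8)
The plan is to mirror the classical argument behind \Cref{prop:Gessel-ribbon}, replacing the single ribbon $\rmZ_\aa$ by the colored zigzag shape $\rmZ_{(\aa,\ee)}$ and its associated $r$-partite skew shape $\bll_{\rmZ_{(\aa,\ee)}}$, and using \Cref{prop:coloredzz-and-coltab} in place of \Cref{prop:ribbons-and-tableaux}. I would establish the two equalities in \Cref{eq:coloredRibbonSchur-to-Schur} separately. The first, $r_{(\aa,\ee)} = F^{(r)}(\ol{\rmD}_{(\aa,\ee)}^{-1})$, is obtained by routing everything through the tableau sum $\sum_{\bQ \in \SYT(\bll_{\rmZ_{(\aa,\ee)}})} F^{(r)}_{\rmco(\bQ)}$; the second, the Schur expansion, is obtained by expanding $r_{(\aa,\ee)}$ in the Schur basis color by color.

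For the first equality I would begin by recording that $r_{(\aa,\ee)} = s_{\bll_{\rmZ_{(\aa,\ee)}}}$. Indeed, grouping the rainbow blocks of $(\aa,\ee)$ by color and using that a product of skew Schur functions in a single alphabet is the skew Schur function of the direct sum of the shapes (i.e. $s_\mu s_\nu = s_{\mu \oplus \nu}$), each monochromatic factor $\prod_{i:\, \ee_{(i)}=j} r_{\aa_{(i)}}(\bx^{(j)})$ collapses to $s_{Z^{(j)}}(\bx^{(j)})$, and the product over $j$ is exactly $s_{\bll_{\rmZ_{(\aa,\ee)}}}$. Next I would observe that \Cref{eq:SchurF-colored}, although stated for straight $r$-partite shapes, extends verbatim to $r$-partite skew shapes: applying the classical skew expansion \Cref{eq:SchurF} in each alphabet and reassembling the product of fundamental quasisymmetric functions into a single colored one gives $s_{\bll_{\rmZ_{(\aa,\ee)}}} = \sum_{\bQ \in \SYT(\bll_{\rmZ_{(\aa,\ee)}})} F^{(r)}_{\rmco(\bQ)}$. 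Finally, \Cref{prop:coloredzz-and-coltab} supplies a bijection $\rmD_{(\aa,\ee)} \to \SYT(\bll_{\rmZ_{(\aa,\ee)}})$ with $\sDes(\bQ) = \sDes(\ol{(\pi,\rzz)}^{-1})$; composing it with the map $(\pi,\rzz) \mapsto \ol{(\pi,\rzz)}^{-1} = (\pi^{-1},\pi^{-1}(\rzz))$, which is an involution carrying $\ol{\rmD}_{(\aa,\ee)}^{-1}$ bijectively onto $\rmD_{(\aa,\ee)}$, yields a colored-descent-preserving bijection $\ol{\rmD}_{(\aa,\ee)}^{-1} \to \SYT(\bll_{\rmZ_{(\aa,\ee)}})$. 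Since $F^{(r)}_{(\sigma,\rww)} = F^{(r)}_{\rmco(\sigma,\rww)}$ depends only on the colored descent composition, summing over this bijection identifies $F^{(r)}(\ol{\rmD}_{(\aa,\ee)}^{-1})$ with the tableau sum, completing the first equality.

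For the Schur expansion I would expand $r_{(\aa,\ee)} = \prod_j s_{Z^{(j)}}(\bx^{(j)})$ in the straight Schur basis of each alphabet, so that the coefficient of $s_{\bll}$, with $\bll = (\nu^{(0)}, \dots, \nu^{(r-1)})$, is $\prod_j \langle s_{Z^{(j)}}, s_{\nu^{(j)}}\rangle$. It then remains to match this product with $c_{\bll}(\aa,\ee)$. The key observation is that any $\bQ \in \SYT(\bll)$ with $\rmco(\bQ) = (\aa,\ee)$ has its color vector forced to equal $\tl{\ee}$, so the entries lying in each part $Q^{(j)}$ form a fixed value set and the colored descent condition decouples into independent conditions on the $Q^{(j)}$: concretely $Q^{(j)}$ must be a standard tableau of shape $\nu^{(j)}$ whose descents are prescribed within each color-$j$ rainbow block of $(\aa,\ee)$ and unconstrained between blocks. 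This gives $c_{\bll}(\aa,\ee) = \prod_j (\text{such monochromatic counts})$, and each factor equals $\langle s_{Z^{(j)}}, s_{\nu^{(j)}}\rangle$ once one knows that $s_{Z^{(j)}}$, the direct sum of the ribbons $\rmZ_{\aa_{(i)}}$ of color $j$, expands as the sum of ribbon Schur functions over all resolutions of its joints (the iterated product formula $r_\mu r_\nu = r_{\mu\nu} + r_{\mu \odot \nu}$), combined with \Cref{prop:Gessel-ribbon} applied to each resulting ribbon. Nonnegativity of the $c_{\bll}(\aa,\ee)$ then yields Schur-positivity, and passing through $\ch^{(r)}$ shows $\ol{\rmD}_{(\aa,\ee)}^{-1}$ is Schur-positive for $\varrho_{(\aa,\ee)}$.

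The main obstacle is this last identification, i.e. the second equality: rigorously establishing the color decoupling and, within a single color, matching the count of straight tableaux carrying a prescribed within-block/free-between-block descent pattern with $\langle s_{Z^{(j)}}, s_{\nu^{(j)}}\rangle$. The decoupling is clean once the forced-color-vector observation is in place, but the monochromatic identification genuinely needs the extension of Gessel's ribbon theorem from a single ribbon to a disjoint union of ribbons, and keeping the bookkeeping of joints consistent with the definition of $\sDes(\bQ)$ is where care is required. By contrast, the first equality is comparatively formal, relying only on \Cref{prop:coloredzz-and-coltab}, the product-to-direct-sum identity, and the skew extension of \Cref{eq:SchurF-colored}.
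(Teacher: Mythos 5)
Your proposal is correct, and its first half follows the paper's own route: the paper also derives $r_{(\aa,\ee)} = F^{(r)}(\ol{\rmD}_{(\aa,\ee)}^{-1})$ from \Cref{prop:coloredzz-and-coltab}, and your extra details --- the identity $s_\mu s_\nu = s_{\mu\oplus\nu}$ for skew shapes in one alphabet, the skew extension of \Cref{eq:SchurF-colored}, and the conjugate-inverse involution $w \mapsto \ol{w}^{-1}$ --- are exactly what the paper's \textquote{follows directly} compresses. Where you genuinely diverge is the second equality. The paper gets it in one stroke from the colored Robinson--Schensted correspondence of White and Stanton--White: an element $w \in \ol{\rmD}_{(\aa,\ee)}^{-1}$ corresponds to a pair $(\bP,\bQ)$ of same-shape $r$-partite tableaux with $\sDes(\bP)=\sDes(\ol{w}^{-1})$ and $\sDes(\bQ)=\sDes(w)$, so that $F^{(r)}(\ol{\rmD}_{(\aa,\ee)}^{-1}) = \sum_{\bll}\sum_{\bP,\bQ} F^{(r)}_{\rmco(\bQ)}$ with $\bP,\bQ \in \SYT(\bll)$ and $\rmco(\bP)=(\aa,\ee)$, and \Cref{eq:SchurF-colored} finishes. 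You instead compute the Schur coefficients of $r_{(\aa,\ee)}$ directly as $\prod_j \langle s_{Z^{(j)}}, s_{\nu^{(j)}}\rangle$ and match them with $c_{\bll}(\aa,\ee)$ by color decoupling plus the monochromatic identification via the ribbon rule $r_\mu r_\nu = r_{\mu\cdot\nu} + r_{\mu\odot\nu}$ and \Cref{prop:Gessel-ribbon}. This route does go through: the forced color vector reduces $c_{\bll}(\aa,\ee)$ to a product of independent counts of tableaux $Q^{(j)}$ on fixed value sets; standardizing each $Q^{(j)}$ turns \textquote{no constraint between blocks} (the relevant entries are never consecutive integers) into \textquote{free descents at the joints}, and summing over joint resolutions is exactly the iterated ribbon product --- so the obstacle you flag is real but surmountable by standard means. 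What your approach buys is independence from the colored RSK machinery: you need only classical symmetric-function identities and Gessel's theorem, and you obtain as a byproduct the explicit factorization $c_{\bll}(\aa,\ee) = \prod_j \langle s_{Z^{(j)}}, s_{\nu^{(j)}}\rangle$ into monochromatic Littlewood--Richardson-type coefficients, which is not visible in the paper's proof. What it costs is length and bookkeeping: the paper's RSK argument is three lines and handles both descent sets simultaneously, whereas your decoupling and standardization step, while correct, must be written out carefully to be rigorous.
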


	The proof of \Cref{thm:mainA} is essentially a colored version of that of \Cref{prop:Gessel-ribbon}. It is based on a colored analogue of the well-known \defn{Robinson--Schensted correspondence}, first considered by White \cite{Whi83} and further studied by Stanton and White \cite{SW85} (see also \cite[Section~6]{Sta82} and \cite[Section~5]{AAER17} for the case of two colors). It is a bijection from $\fS_{n,r}$ to the set of all pairs of standard Young $r$-partite tableaux of the same shape and size $n$. If $w \mapsto (\bP,\bQ)$ under this correspondence, then
\begin{align*}
\sDes(w) &= \sDes(\bQ) \\
\sDes(\ol{w}^{-1}) &= \sDes(\bP).
\end{align*}

\begin{proof}[Proof of \Cref{thm:mainA}]
The first equality of \Cref{eq:coloredRibbonSchur-to-Schur} follows directly from \Cref{prop:coloredzz-and-coltab}. For the second equality, applying the colored analogue of the Robinson--Schensted correspondence yields
\[
F^{(r)}(\ol{\rmD}_{(\aa,\ee)}^{-1}) = 
\sum_{\bll \vdash n} \,
\sum_{\substack{\bP, \hspace{1pt}\bQ \, \in \, \SYT(\bll) \\ \rmco(\bP) = (\aa,\ee)}} F_{\rmco(\bQ)}^{(r)}. 
\]
and the proof follows from \Cref{eq:SchurF-colored}.
\end{proof}

	In our running example, we see that
\begin{align*}
r_{(2^{\zerocol 0}, 2^{\onecol 1} ,1^{\onecol 1} , 1^{\threecol 3}, 3^{\onecol 1} , 1^{\twocol 2})} 
&= 
s_{2}(\bx^{({\zerocol 0})})
s_{21}(\bx^{({\onecol 1})})
s_{3}(\bx^{({\onecol 1})})
s_{1}(\bx^{({\twocol 2})})
s_{1}(\bx^{({\threecol 3})}) \\ 
&=
s_{2}(\bx^{({\zerocol 0})})
\left((s_{321}(\bx^{({\onecol 1})}) + s_{411}(\bx^{({\onecol 1})}) + s_{42}(\bx^{({\onecol 1})}) + s_{51}(\bx^{({\onecol 1})})\right)
s_{1}(\bx^{({\twocol 2})})
s_{1}(\bx^{({\threecol 3})}) \\
&= 
s_{(2,321,1,1)} + s_{(2,411,1,1)} + s_{(2,42,1,1)} + s_{(2,51,1,1)},
\end{align*}
where we omitted the parentheses and commas in (regular) partitions for ease of notation.  There are many ways to make this computation, the most \textquote{powerful} of which is to implement the \defn{Littlewood-Richardson rule} \cite[Section~7.15]{StaEC2}. Thus, the decomposition of the colored descent representation corresponding to $(2^{\zerocol 0}, 2^{\onecol 1} ,1^{\onecol 1} , 1^{\threecol 3}, 3^{\onecol 1} , 1^{\twocol 2})$ is the multiplicity free direct sum of the irreducible $\fS_{10,4}$-representations corresponding to the 4-partite partitions $(2,321,1,1), (2,411,1,1), (2,42,1,$ $1)$ and $(2,51,1,1)$ with corresponding 4-partite tableaux
\begin{alignat*}{2}
\ytableausetup{mathmode}
&\left(
\begin{ytableau}
1 & 2 
\end{ytableau}\, , \
\begin{ytableau}
3 & 4 & 9  \\
5 & 8 \\
7 
\end{ytableau}\, , \
\begin{ytableau}
10   
\end{ytableau}\, , \
\begin{ytableau}
6
\end{ytableau}
\right), \quad
&&\left(
\begin{ytableau}
1 & 2   
\end{ytableau}\, , \
\begin{ytableau}
3 & 4 & 8 & 9  \\
5  \\
7 
\end{ytableau}\, , \
\begin{ytableau}
10     
\end{ytableau}\, , \
\begin{ytableau}
6
\end{ytableau}
\right), \\
&\left(
\begin{ytableau}
1 & 2   
\end{ytableau}\, , \
\begin{ytableau}
3 & 4 & 8 & 9  \\
5 & 7 
\end{ytableau}\, , \
\begin{ytableau}
10     
\end{ytableau}\, , \
\begin{ytableau}
6
\end{ytableau}
\right), \quad
&&\left(
\begin{ytableau}
1 & 2  
\end{ytableau}\, , \
\begin{ytableau}
3 & 4 & 7 & 8 & 9  \\
5 
\end{ytableau}\, , \
\begin{ytableau}
10     
\end{ytableau}\, , \
\begin{ytableau}
6
\end{ytableau}
\right).
\end{alignat*}

	We can express the colored ribbon Schur function as an alternating sum of elements of a basis of $\Sym_n^{(r)}$ which can be viewed as the colored analogue of the basis of complete homogeneous symmetric functions. For an $r$-partite partition $\bll = (\ll^{(0)},\ll^{(1)}, \dots, \ll^{(r-1)})$, let
\[
h_\bll := h_\bll(\bx^{(0)},\bx^{(1)}, \dots, \bx^{(r-1)}) := h_{\ll^{(0)}}(\bx^{(0)})h_{\ll^{(1)}}(\bx^{(1)})\cdots h_{\ll^{(r-1)}}(\bx^{(r-1)}).
\]
The set $\{h_\bll : \bll \vdash n\}$ forms a basis for $\Sym_n^{(r)}$. 

	Similarly to the classical case, given $(\aa,\ee) \in \rmComp(n,r)$ we can form an $r$-partite partition $\bll_{(\aa,\ee)}$ of $n$ by first splitting its entries into colored components and then rearranging the entries of each component in weakly decreasing order. We write $h_{(\aa,\ee)}:=h_{\bll_{(\aa,\ee)}}$. 
	
\begin{theorem}
\label{thm:mainB}
For every $(\aa,\ee) \in \rmComp(n,r)$,
\begin{equation}
\label{eq:mainB}
r_{(\aa,\ee)} \ = \,
\sum_{\substack{(\bb,\delta) \in \rmComp(n,r) \\ (\bb,\delta) \preceq (\aa,\ee)}} \, (-1)^{\ell(\aa) - \ell(\bb)} \, h_{(\bb,\delta)}.
\end{equation}
\end{theorem}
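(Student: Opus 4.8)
The plan is to reduce \Cref{thm:mainB} to the classical MacMahon formula \cref{eq:ribbon-to-completehomogeneous} by showing that both sides of \cref{eq:mainB} factor along the rainbow decomposition of $(\aa,\ee)$. Write this decomposition as $(\aa,\ee) = (\aa_{(1)},\ee_{(1)})\cdots(\aa_{(m)},\ee_{(m)})$. The left-hand side factors immediately, since by \Cref{def:ribbonSchur-colored-and-descentrep-colored} we have $r_{(\aa,\ee)} = \prod_{i=1}^{m} r_{\aa_{(i)}}(\bx^{(\ee_{(i)})})$, so all of the work lies on the right-hand side. As a first bookkeeping step I would record the multiplicativity of the colored complete homogeneous functions: because $h_\ll = \prod_k h_{\ll_k}$ does not depend on the order of the parts, the rearrangement into weakly decreasing order used to define $\bll_{(\bb,\delta)}$ is immaterial, and one has $h_{(\bb,\delta)} = \prod_{i} h_{\bb_i}(\bx^{(\delta_i)})$ for every $(\bb,\delta) \in \rmComp(n,r)$.

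The heart of the argument is an order-theoretic factorization of the lower interval of $(\aa,\ee)$. Because the covering relations of $\rmComp(n,r)$ merge only two adjacent parts of equal color, every $(\bb,\delta) \preceq (\aa,\ee)$ carries the same extended color vector as $(\aa,\ee)$ and is obtained by merging adjacent parts within the monochromatic runs cut out by the rainbow blocks; no merge can straddle a boundary between consecutive blocks, where the color changes. I would make this precise as a bijection
\[
\{(\bb,\delta) \preceq (\aa,\ee)\} \ \longleftrightarrow \ \prod_{i=1}^{m} \{\bb_{(i)} \preceq \aa_{(i)}\},
\]
under which the rainbow decomposition of $(\bb,\delta)$ is $(\bb_{(1)},\ee_{(1)})\cdots(\bb_{(m)},\ee_{(m)})$. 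This yields $\ell(\aa)-\ell(\bb) = \sum_{i}(\ell(\aa_{(i)})-\ell(\bb_{(i)}))$ and, grouping the factors of the previous paragraph by rainbow block, $h_{(\bb,\delta)} = \prod_{i} h_{\bb_{(i)}}(\bx^{(\ee_{(i)})})$.

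Granting this, I would substitute the two factorizations into the right-hand side of \cref{eq:mainB} and interchange the sum and product to obtain
\[
\sum_{(\bb,\delta) \preceq (\aa,\ee)} (-1)^{\ell(\aa)-\ell(\bb)} h_{(\bb,\delta)} = \prod_{i=1}^{m} \Bigg( \sum_{\bb_{(i)} \preceq \aa_{(i)}} (-1)^{\ell(\aa_{(i)})-\ell(\bb_{(i)})} h_{\bb_{(i)}}(\bx^{(\ee_{(i)})}) \Bigg).
\]
Since \cref{eq:ribbon-to-completehomogeneous} is an identity of symmetric functions valid in any single alphabet, applying it with the alphabet $\bx^{(\ee_{(i)})}$ identifies the $i$-th factor with $r_{\aa_{(i)}}(\bx^{(\ee_{(i)})})$, and the product of these is exactly $r_{(\aa,\ee)}$ by \Cref{def:ribbonSchur-colored-and-descentrep-colored}, which closes the argument.

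I expect the only genuine obstacle to be the order-theoretic factorization of the second paragraph: verifying carefully that coarsenings of $(\aa,\ee)$ correspond, in a length- and sign-compatible way, to tuples of coarsenings of the rainbow blocks, and that the induced rainbow decomposition is the expected one. Once this is in hand, the remainder is purely formal, resting only on the definition of $r_{(\aa,\ee)}$, the multiplicativity of the $h$'s, and the classical formula.
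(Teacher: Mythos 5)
Your proposal is correct and is essentially the paper's own proof: both arguments rest on the factorization $r_{(\aa,\ee)} = \prod_i r_{\aa_{(i)}}(\bx^{(\ee_{(i)})})$ from \Cref{def:ribbonSchur-colored-and-descentrep-colored}, the classical formula \cref{eq:ribbon-to-completehomogeneous} applied in each alphabet $\bx^{(\ee_{(i)})}$, and the observation that coarsenings $(\bb,\delta) \preceq (\aa,\ee)$ correspond exactly to tuples of coarsenings $\bb_{(i)} \preceq \aa_{(i)}$ of the rainbow blocks, with lengths additive and the $h$'s multiplicative. The only difference is cosmetic direction: the paper expands the left-hand side and multiplies out, whereas you factor the right-hand side and reassemble.
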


\begin{proof}
Let $(\aa,\ee)$ be a colored composition of $n$ with rainbow decomposition
\[
(\aa,\ee) = (\aa_{(1)},\ee_{(1)})(\aa_{(2)},\ee_{(2)})\cdots(\aa_{(m)},\ee_{(m)}).
\]
Expanding each term $r_{\aa_{(i)}}(\bx^{(\ee_{(i)})})$ in the definition of the colored ribbon Schur function $r_{(\aa,\ee)}$ according to \Cref{eq:Stanley-desrep} yields
\begin{align*}
r_{(\aa,\ee)} 
&= 
r_{\aa_{(1)}}(\bx^{(\ee_{(1)})})r_{\aa_{(2)}}(\bx^{(\ee_{(2)})})\cdots r_{\aa_{(m)}}(\bx^{(\ee_{(m)})}) \\ 
&=
\prod_{1 \le i \le m} \sum_{\bb_{(i)} \preceq \, \aa_{(i)}} \, (-1)^{\ell(\aa_{(i)}) - \ell(\bb_{(i)})} h_{\bb_{(i)}}(\bx^{(\ee_{(i)})}) \\
&=
\sum_{\substack{1 \le i \le m \\ \bb_{(i)} \preceq \, \aa_{(i)}}}
\, (-1)^{\ell(\aa) - (\ell(\bb_{(1)}) + \cdots + \ell(\bb_{(m)}))}
\, h_{\bb_{(1)}}(\bx^{(\ee_{(1)})})\cdots h_{\bb_{(m)}}(\bx^{(\ee_{(m)})}),
\end{align*}
since $\ell(\aa) = \ell(\aa_{(1)}) + \cdots + \ell(\aa_{(m)})$. The proof follows by considering the colored composition with rainbow decomposition $(\bb,\delta) = (\bb_{(1)},\ee_{(1)})\cdots(\bb_{(m)},\ee_{(m)})$ and noticing that the conditions $\bb_{(i)} \preceq \aa_{(i)}$ for all $1 \le i \le m$ are precisely equivalent to $(\bb,\delta) \preceq  (\aa,\ee)$ and that
\begin{align*}
\ell(\bb) &= \ell(\bb_{(1)}) + \cdots + \ell(\bb_{(m)}) \\ 
h_{(\bb,\delta)} &= h_{\bb_{(1)}}(\bx^{(\ee_{(1)})})\cdots h_{\bb_{(m)}}(\bx^{(\ee_{(m)})}).
\end{align*}
\end{proof}

	In our running example, we have	
\[
r_{(2^{\zerocol 0}, 2^{\onecol 1} ,1^{\onecol 1} , 1^{\threecol 3}, 3^{\onecol 1} , 1^{\twocol 2})} = 
h_{(2^{\zerocol 0}, 2^{\onecol 1} ,1^{\onecol 1} , 1^{\threecol 3}, 3^{\onecol 1} , 1^{\twocol 2})} - 
h_{(2^{\zerocol 0}, 3^{\onecol 1} , 1^{\threecol 3}, 3^{\onecol 1} , 1^{\twocol 2})}
\]
which is in agreement with the expansion in the Schur basis that we calculated above, since
\[
h_{321} - h_{33} = s_{321} + s_{411} + s_{42} + s_{51}.
\]

	Finally, let us describe the representation-theoretic version of \Cref{eq:mainB}. For this we need to introduce some notation. We fix a primitive $r$-th root of unity $\omega$. For all $0 \le j \le r-1$, let $\bbone_{n,j}$ be the irreducible $\fS_{n,r}$-representation corresponding to the $r$-partite partition having all parts empty, except for the part of color $j$ which is equal to $(n)$. Then,
\[
\bbone_{n,j}(\pi,\ee) = \omega^{j(\ee_1 + \ee_2 + \cdots + \ee_n)}
\]
for all $(\pi,\ee) \in \fS_{n,r}$ (see, for example, \cite[Section~4]{BC12}).

	For $(\aa,\ee) \in \rmComp(n,r)$ of length $k$, we define the following $\fS_{\aa,r}$-representation
\[
\bbone_{(\aa,\ee)} := \bbone_{\aa_1,\ee_1} \otimes \bbone_{\aa_2,\ee_2} \otimes \cdots \otimes \bbone_{\aa_k,\ee_k},
\]
where $\fS_{\aa,r} := \fS_{\aa_1,r}\times\fS_{\aa_2,r}\times\cdots\times\fS_{\aa_k,r}$ is embedded in $\fS_{n,r}$ in the obvious way. Since the colored characteristic map is a ring homomorphism, we have
\begin{equation}
\label{eq:characteristicmap-shaperep}
\ch^{(r)}\left(\bbone_{(\aa,\ee)} \uparrow_{\fS_{\aa,r}}^{\fS_{n,r}}\right) \ = \
h_{\aa_1}(\bx^{(\ee_1)})h_{\aa_2}(\bx^{(\ee_2)})\cdots h_{\aa_k}(\bx^{(\ee_k)}),
\end{equation}
where the second equality follows from basic properties of the colored characteristic map \cite[Corollary~3]{Poi98}
\[
\ch^{(r)}(\bbone_{\aa_i,\ee_i}) = s_{(\aa_i)}(\bx^{(\ee_i)}) =  h_{\aa_i}(\bx^{(\ee_i)}).
\]
Since the characteristic map is actually a ring isomorphism \cite[Theorem~2]{Poi98}, applying it to \Cref{eq:mainB} yields the following.
\begin{corollary}
\label{cor:mainB}
For all $(\aa,\ee) \in \rmComp(n,r)$, the character $\chi_{(\aa,\ee)}$ of the colored descent representation corresponding to $(\aa,\ee)$ satisfies
\begin{equation}
\label{eq:mainB-rep}
\chi_{(\aa,\ee)} = 
\sum_{\substack{(\bb,\delta) \in \rmComp(n,r) \\ (\bb,\delta) \preceq (\aa,\ee)}} \, (-1)^{\ell(\aa) - \ell(\bb)} \, \bbone_{(\bb,\delta)}\uparrow_{\fS_{\bb,r}}^{\fS_{n,r}}.
\end{equation}
\end{corollary}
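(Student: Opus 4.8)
The plan is to obtain \eqref{eq:mainB-rep} by applying the inverse of the colored characteristic map $\ch^{(r)}$ to the symmetric-function identity \eqref{eq:mainB} of \Cref{thm:mainB}. Since $\ch^{(r)}$ is a $\CC$-linear isomorphism from virtual $\fS_{n,r}$-representations to $\Sym_n^{(r)}$ and, by definition, $\ch^{(r)}(\varrho_{(\aa,\ee)}) = r_{(\aa,\ee)}$ with $\chi_{(\aa,\ee)}$ the character of $\varrho_{(\aa,\ee)}$, it suffices to recognize each summand $h_{(\bb,\delta)}$ on the right-hand side of \eqref{eq:mainB} as the image under $\ch^{(r)}$ of the induced representation $\bbone_{(\bb,\delta)}\uparrow_{\fS_{\bb,r}}^{\fS_{n,r}}$.

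First I would unwind the definition of $h_{(\bb,\delta)} = h_{\bll_{(\bb,\delta)}}$. By construction, $\bll_{(\bb,\delta)} = (\mu^{(0)}, \dots, \mu^{(r-1)})$ is the $r$-partite partition whose $j$-th component $\mu^{(j)}$ is the rearrangement in weakly decreasing order of the parts $\bb_i$ with $\delta_i = j$. Because distinct one-row complete homogeneous factors commute and do not depend on the order in which the parts are listed, I obtain
\[
h_{(\bb,\delta)} = \prod_{j=0}^{r-1} h_{\mu^{(j)}}(\bx^{(j)}) = \prod_{i=1}^{\ell(\bb)} h_{\bb_i}(\bx^{(\delta_i)}).
\]
Comparing this with \eqref{eq:characteristicmap-shaperep} yields exactly $h_{(\bb,\delta)} = \ch^{(r)}\!\bigl(\bbone_{(\bb,\delta)}\uparrow_{\fS_{\bb,r}}^{\fS_{n,r}}\bigr)$.

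Substituting this identification into \eqref{eq:mainB} and applying $(\ch^{(r)})^{-1}$ term by term, using linearity together with the fact that $\ch^{(r)}$ is a ring isomorphism \cite[Theorem~2]{Poi98}, produces the equality of virtual representations
\[
\varrho_{(\aa,\ee)} = \sum_{\substack{(\bb,\delta) \in \rmComp(n,r) \\ (\bb,\delta) \preceq (\aa,\ee)}} (-1)^{\ell(\aa) - \ell(\bb)} \, \bbone_{(\bb,\delta)}\uparrow_{\fS_{\bb,r}}^{\fS_{n,r}}.
\]
Passing to characters then gives \eqref{eq:mainB-rep}, the signs carrying over unchanged from \eqref{eq:mainB}. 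There is essentially no hard step here: the only point requiring care is the bookkeeping of the previous paragraph, namely that grouping the parts of $(\bb,\delta)$ by color and reordering within each color leaves the product of complete homogeneous symmetric functions unchanged, so that the reordered object $h_{\bll_{(\bb,\delta)}}$ agrees with the color-respecting product appearing in \eqref{eq:characteristicmap-shaperep}.
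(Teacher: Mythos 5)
Your proposal is correct and follows exactly the paper's route: the paper likewise derives \Cref{cor:mainB} by applying the ring isomorphism $\ch^{(r)}$ (equivalently its inverse) to \Cref{eq:mainB}, using \Cref{eq:characteristicmap-shaperep} to identify each $h_{(\bb,\delta)}$ with $\ch^{(r)}\bigl(\bbone_{(\bb,\delta)}\uparrow_{\fS_{\bb,r}}^{\fS_{n,r}}\bigr)$. The only addition is your explicit check that reordering the parts within each color class does not change the product of complete homogeneous factors, a small bookkeeping point the paper leaves implicit.
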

	Schur functions can also be viewed as generating functions of certain $P$-partitions, where $P$ is a Schur labeled poset arising from the (possibly skew) Young diagram (see, for example, \cite[Section~2]{Ges84} and \cite[Section~7.19]{StaEC2}). In view of this, we conclude with the following question.
\begin{question}
\label{q:open}
Are colored ribbon Schur functions generating functions of certain colored $P$-partitions, where $P$ arises from the corresponding colored zigzag shape, in the sense of \cite{HP10}?
\end{question}

\section*{Acknowledgments}
The author would like to thank Christos Athanasiadis for suggesting the problem and providing \Cref{eq:mainB} in the hyperoctahedral case.


\end{document}